\pgfplotsset{compat=1.15}
\newtheorem{thm}{Theorem}[section]
\newtheorem{lemma}[thm]{Lemma}
\newtheorem{prop}[thm]{Proposition}
\newtheorem{conj}[thm]{Conjecture}
\newtheorem{thm-con}[thm]{Theorem-Conjecture}
\numberwithin{equation}{section}
\theoremstyle{definition}
\newtheorem{defn}[thm]{Definition}
\newtheorem{rmk}[thm]{Remark}
\newtheorem{exmp}[thm]{Example}
\DeclareMathOperator{\pol}{pol}
\DeclareMathOperator{\Ass}{Ass}
\DeclareMathOperator{\MinAss}{MinAss}
\newcommand{\G}{\mathcal{G}}
\newcommand{\m}{\mathfrak{m}}
\def\K{\mathbb{K}}
\begin{document}
\title[Support-2 Monomial Ideals that are Simis]{Support-2 Monomial Ideals that are Simis}
%Monomial Ideals with Support Two: Simis Property and Standard Linear Weights
\author[Paromita Bordoloi]{Paromita Bordoloi}
\email{2022rma0026@iitjammu.ac.in}
\address{Department of Mathematics, Indian Institute of Technology Jammu, J\&K, India - 181221.}

\author[Kanoy Kumar Das]{Kanoy Kumar Das}
\email{kanoydas0296@gmail.com, kanoydas@cmi.ac.in}
\address{Chennai Mathematical Institute, H1, SIPCOT IT Park, Siruseri, Kelambakkam 603103, India.}

\author[Rajiv Kumar]{Rajiv Kumar}
\email{rajiv.kumar@iitjammu.ac.in}
\address{Department of Mathematics, Indian Institute of Technology Jammu, J\&K, India - 181221.}

\date{\today}

\subjclass[2020]{Primary 13C70, 05E40; Secondary 13A70, 05C22, 05C25}

\keywords{support-$2$ monomial ideal, symbolic power, Simis ideal, primary decomposition}

\begin{abstract}
    A monomial ideal $I\subseteq \K[x_1,\ldots , x_n]$ is called a Simis ideal if $I^{(s)}=I^s$ for all $s\geq 1$, where $I^{(s)}$ denotes the $s$-th symbolic power of $I$. Let $I$ be a support-$2$ monomial ideal such that its irreducible primary decomposition is minimal.  We prove that $I$ is a Simis ideal if and only if $\sqrt{I}$ is Simis and $I$ has a standard linear weighting. This result thereby proves a recent conjecture for the class of support-$2$ monomial ideals proposed by Mendez, Pinto, and Villarreal. 
    Furthermore, we give a complete characterization of the Cohen-Macaulay property for support-$2$ monomial ideals whose radical is the edge ideal of a whiskered graph. Finally, we classify when these ideals are Simis in degree 2.
\end{abstract}

\maketitle
\section{Introduction and Preliminaries}\label{Intro_Pre}
Let $S=\K[x_1,\ldots ,x_n]$ be a polynomial ring over a field $\K$ and $I\subseteq S$ be an ideal. The $s$-th \emph{symbolic power} of $I$, denoted by $I^{(s)}$, is defined by 
\[
I^{(s)}= \bigcap_{P\in \MinAss(I)} (I^sS_P\cap S),
\]
where $\MinAss(I)$ denotes the set of all minimal primes of $S/I$. 
One has the following simpler expression \cite[Lemma 2]{gimenez2018symbolic} for the $s$-th symbolic power of a monomial ideal $I$:
\[
I^{(s)} = \bigcap_{P \in \MinAss(I)}Q(P)^s,
\]
where $Q(P)$ is the primary component of $I$ corresponding to $P$.
Notably, there is an alternative notion of symbolic powers, defined using the entire set of associated primes $\Ass(I)$ of the ideal $I$ (see \cite{CooperEtAl2017}, or the survey \cite{SymbolicSurvey2018}). These two notions of symbolic powers coincide if $\Ass(I)=\MinAss(I)$, that is, when $I$ does not have any embedded associated prime. 

\par 
A general theme of study in the context of symbolic powers is the comparison of symbolic powers with the ordinary powers of an ideal. The so-called \textit{containment problem} asks one to determine positive integers $s$ and $r$ for which the containment $I^{(s)}\subseteq I^r$ holds. Another quite natural question that one can ask is when does the $s$-th symbolic power coincide with the $s$-th ordinary power? These two problems have been extensively investigated for several important classes of ideals from various aspects of mathematics. In this work, we address these two questions for the class of monomial ideals in a polynomial ring, using commutative algebra and combinatorics.

\par
The term `Simis ideal' is introduced to recognize the pioneering work of Aron Simis on symbolic
powers of monomial ideals.  A monomial ideal $I$ is called a \textit{Simis ideal} if $I^{(s)}=I^s$ for all $s\geq 1$, and is called \textit{Simis in degree $s$} if $I^{(s)}=I^s$. Finding a complete combinatorial characterization of Simis ideals is an extremely difficult task. For square-free monomial ideals, this problem is equivalent (see \cite{GRV2009}) to a famous conjecture about the packing problem due to Conforti and Cornuejols \cite{Packing}. A classical result due to Simis, Vasconcelos, and Villarreal \cite{SimisVasconcelosVillarreal1994} characterizes edge ideals of simple graphs that are Simis ideals. There are other classes of monomial ideals for which Simis ideals have been identified: the edge ideals of weighted oriented graphs \cite{GMV2024, MandalPradhan2021}, a certain class of generalized edge ideals \cite{das2024equality}, cover ideals of graphs \cite{GRV2005, GRV2009}, matroidal ideals \cite{ficarra2025symbolic}, to name a few.

\par
We now set up proper notations and conventions that we want to follow in the rest of the article. The objects we are going to study are the monomial ideals whose minimal generating set consists of monomials of support $2$. To be more precise, let $S=\K[x_1,\ldots , x_n]$ be a polynomial ring, where $\K$ is a field. Recall that monomial ideals are the ideals in $S$ which are generated by monomials. We say that an ideal $I\subseteq S$ is a \textit{support-$2$ monomial ideal} if $\mathcal{G}(I)\subseteq \{x_i^ax_j^b\mid 1\leq i<j\leq n, a,b\geq 1\}$, where $\mathcal{G}(I)$ denotes the unique set of minimal monomial generators of the ideal $I$. A classical example of support-$2$ monomial ideals is the class of edge ideals of graphs, which have been studied extensively in the literature. Let $G$ be a simple graph with the vertex set $V(G)=\{x_1,\ldots , x_n\}$, and the edge set $E(G)\subseteq 2^{V(G)}$. The \textit{edge ideal} of $G$, denoted by $I(G)$, is defined by $I(G):=\left\langle x_ix_j \mid \{x_i,x_j\}\in E(G)\right\rangle$. By abuse of notation, we sometimes write $\{i,j\}\in E(G)$, instead of $\{x_i,x_j\}\in E(G)$. When we say $\{i,j\}\in E(G)$, or $\{x_i,x_j\}\in E(G)$, or $x_ix_j\in I(G)$, or $x_i^ax_j^b\in \G(I)$ for some support-$2$ monomial ideal $I$, we always assume that $i<j$, and we follow this convention throughout this article. Note that, if $I$ is a support-$2$ monomial ideal, then its radical $\sqrt{I}$ is an edge ideal of some graph $G$, which we will denote by $G(I)$, and call this the \textit{underlying simple graph} of the ideal $I$.

Let $I\subseteq S$ be a support-$2$ monomial ideal, and $G(I)$ be the underlying simple graph of $I$. For an edge $\{x_i,x_j\}\in E(G(I))$ with $i<j$, let 
\[
T_{i,j}=\{x_i^{w_{i,j}^1}x_j^{w_{j,i}^1},\ldots , x_i^{w_{i,j}^k}x_j^{w_{j,i}^k}\}\subseteq \G(I)
\] 
be the set of all minimal monomial generators of $I$ supported by the variables $x_i,x_j$, where $k\geq 1$. If $\{x_i,x_j\}\notin E(G(I))$, then we set $T_{i,j}=\emptyset$. Thus, we have a partition of the set of minimal monomial generators of $I$, given by \[\G(I)=\bigcup_{\substack{\{x_i,x_j\}\in E(G(I)),\\ i<j}}T_{i,j}.\] Moreover, for our purpose, we assume that
\[
w_{i,j}^1>w_{i,j}^2>\cdots > w_{i,j}^k,
\]
and this will imply that 
\[
w_{j,i}^1<w_{j,i}^2<\cdots < w_{j,i}^k.
\]
With the aid of the above conventions, for a given support-$2$ monomial ideal $I\subseteq S$ and for $1\leq i<j\leq n$ such that $\{x_i,x_j\}\in E(G(I))$, we fix the following notation:
\begin{itemize}
    \item $\alpha_{i,j}$ is the number of minimal monomial generator(s) of $I$ whose support is $\{x_i,x_j\}$. We follow the convention that $\alpha_{i,j}=0$ if $\{x_i,x_j\}\notin E(G(I))$.

    \item $\mu_{i,j}:=w_{i,j}^1$, $\mu_{j,i}:=w_{j,i}^{\alpha_{i,j}}$. In other words, for any $i$, $\mu_{i,j}$ denotes the maximum exponent of the variable $x_i$ in the minimal monomial generators of $I$ whose support is $\{x_i,x_j\}$.

    \item $\nu_{i,j}:=w_{i,j}^{\alpha_{i,j}}$, $\nu_{j,i}:=w_{j,i}^{1}$. In other words, for any $i$, $\nu_{i,j}$ denotes the minimum exponent of the variable $x_i$ in the minimal monomial generators of $I$ whose support is $\{x_i,x_j\}$.
\end{itemize}

We illustrate the above notation with the help of the following example. Consider the ideal, $I=\left\langle x_1x_2^2,x_2^3x_3,x_2^2x_3^2,x_2x_3^3,x_1x_3^2 \right\rangle\subseteq \K[x_1,x_2,x_3]$. Here, $G(I) = C_3$, the cycle graph on $3$ vertices. For the edges $\{x_1,x_2\}$ and $\{x_1,x_3\}$, we see that $\alpha_{1,2}=1=\alpha_{1,3}$. On the other hand, for the edge $\{x_2,x_3\}$, we have $\alpha_{2,3} = 3$. Moreover, $\mu_{2,3}=w^1_{2,3}=3, w^2_{2,3}=2, \nu_{2,3}=w^3_{2,3}=1, \nu_{3,2}=w^1_{3,2}=1, w^2_{3,2}=2, \mu_{3,2}=w^3_{3,2}=3$. If $\alpha_{i,j}=1$ for all $\{x_i,x_j\}\in E(G(I))$, then we say that $I$ is a \textit{generalized edge ideal} (see \cite[Definition 2.1]{das2024equality}). Note that the class of edge ideals of weighted oriented graphs is a subclass of the class of generalized edge ideals.

We now recall the notion of weighted monomial ideals as introduced in \cite{mendez2024symbolic}.  The monomials of $S$ are denoted by $\mathbf{x}^{\mathbf{a}}:=x_1^{a_1}\cdots x_n^{a_n}$, where $\mathbf{a}=(a_1,\ldots ,a_n)$. Let $w:\mathbb{R}^n\rightarrow \mathbb{R}^n$ be a linear function such that $w(\mathbb{N}_+^n)\subseteq \mathbb{N}_+^n$, where $\mathbb{N}_+$ denotes the set of all positive integers. Note that the function $w:\mathbb{R}^n\rightarrow \mathbb{R}^n$ can be expressed as $w=(w_1,\ldots , w_n)$, where $w_i:\mathbb{R}^n\rightarrow \mathbb{R}$ is the $i$-th coordinate function of $w$, that is, $w(\mathbf{a})=(w_1(\mathbf{a}),\ldots , w_n(\mathbf{a}))$ for all $\mathbf{a}\in \mathbb{R}^n$. We call the function $w$ a {\textit{linear weighting}} of $S$. A linear weighting $w$ of $S$ is called \textit{{standard}} if there are positive integers $d_1,\ldots , d_n$ such that for all $\mathbf{a}=(a_1,\ldots , a_n)\in \mathbb{R}^n$, $w(\mathbf{a})=(d_1a_1, \ldots ,d_na_n)$. Note that, in the case of standard linear weighting, the function $w:\mathbb{R}^n\rightarrow \mathbb{R}^n$ is completely determined by $d_1,\ldots , d_n$, as $w(\mathbf{e}_i)=d_i\mathbf{e}_i$ for all $1\leq i\leq n$, where $\mathbf{e}_i, 1\leq i\leq n$ are the unit vectors in $\mathbb{R}^n$. 

Let $I\subseteq S$ be a monomial ideal and let $\mathcal{G}(I)=\{\mathbf{x}^{\mathbf{b}_1}, \ldots , \mathbf{x}^{\mathbf{b}_t}\}$. The weighted monomial ideal of $I$ with respect to a linear weighting $w$, denoted by $I_w$, is defined by 
\[
I_w:=\left\langle\mathbf{x}^{w(\mathbf{a})}\mid \mathbf{x}^{\mathbf{a}}\in I \right\rangle=\left\langle \mathbf{x}^{w(\mathbf{b}_i)}\mid 1\leq i\leq t\right\rangle.
\]

When $w$ is a standard linear weighting, then the weighted monomial ideal $I_w$ is generated minimally by the monomials obtained from $\mathcal{G}(I)$ after replacing each $x_i$ with $x_i^{d_i}$, where $w(\mathbf{e}_i)=d_i\mathbf{e}_i, 1\leq i\leq n$.

\par
A primary decomposition $I=\bigcap_{i=1}^r Q_i$ of an ideal $I$ is said to be \textit{irredundant} if none of the ideals $Q_i$ can be omitted from the intersection. Throughout this article, we assume that every decomposition of an ideal is irredundant. Such a decomposition is called minimal if $\sqrt{Q_i}\neq \sqrt{Q_j}$ for $i\neq j$. An ideal $Q\subseteq S$ is called \textit{irreducible} if it cannot be written as an intersection of two ideals of $S$ that properly contain $Q$. It can be deduced that irreducible monomial ideals are the ideals of the form $\left\langle x_i^{a_{i}}\mid i\in [n], a_{i}\geq 1 \right\rangle$. Moreover, irreducible ideals are primary, and any monomial ideal $I \subseteq S$ has a unique irredundant irreducible decomposition.
%, that is, a presentation of $I$ as an intersection of irreducible ideals  $Q_i \subseteq S$ such that none of the ideals $Q_i$ can be omitted in the presentation. Throughout this article, we shall assume that any decomposition of an ideal into irreducible ideals is irredundant.} 
Although the irreducible decomposition of a monomial ideal $I$ need not be minimal, there are certain classes of ideals (e.g., square-free monomial ideals, edge ideals of weighted oriented graphs) such that their irreducible decompositions are always ~minimal. 
\par
In order to understand the class of Simis ideals, the authors in \cite{mendez2024symbolic} made the following ~conjecture.

\begin{conj}\cite[Conjecture 5.7]{mendez2024symbolic}\label{conj}
    Let $I$ be a monomial ideal without any embedded associated primes. If the irreducible decomposition of $I$ is minimal, and $I$ is a Simis ideal, then there is a Simis square-free monomial ideal $J$ and a standard linear weighting $w$ such that $I=J_w$.   
\end{conj}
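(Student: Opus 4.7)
The plan is to show that any Simis monomial ideal $I$ without embedded associated primes, whose irreducible decomposition is minimal, must already be of the form $I = (\sqrt{I})_w$ for some standard linear weighting $w$, and that $\sqrt{I}$ is itself a Simis square-free monomial ideal. Taking $J := \sqrt{I}$ then completes the proof, since the minimal generators of $J_w$ are exactly the images of the minimal generators of $\sqrt{I}$ under the substitution $x_i \mapsto x_i^{d_i}$. The strategy therefore reduces to two claims: (A) for each variable $x_i$ appearing in the support of some minimal generator of $I$, there exists a positive integer $d_i$ such that every minimal generator of $I$ containing $x_i$ has $x_i$-exponent exactly $d_i$; and (B) the radical $\sqrt{I}$ is itself Simis.

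For claim (A), the central tool is the identity $I^s = I^{(s)} = \bigcap_k Q_k^s$ valid for every $s \geq 1$, where $I = \bigcap_k Q_k$ is the minimal irredundant irreducible decomposition and each component has the form $Q_k = \langle x_{j_1}^{a^k_{j_1}}, \ldots, x_{j_{r_k}}^{a^k_{j_{r_k}}} \rangle$. Suppose toward a contradiction that two minimal generators $m_1 = x_i^a \cdot u$ and $m_2 = x_i^b \cdot v$ of $I$ exist with $a > b \geq 1$, where $u, v$ are monomials not involving $x_i$. The idea is to construct a witness monomial $\xi$ that lies in every $Q_k^s$, hence in $I^{(s)} = I^s$, but that cannot be written as a product of $s$ minimal generators of $I$. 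The construction should assemble $\xi$ from a carefully balanced mixture of powers of $m_1, m_2$ and of the pure-power generators $x_j^{a^k_j}$ of the $Q_k$'s, so that the exponent disparity $a > b$ obstructs every would-be factorization inside $I^s$.

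For claim (B), assuming (A), one observes that the weighting $w$ transports the primary decomposition of $I$ cleanly to a primary decomposition of $\sqrt{I}$: each $Q_k = \langle x_{j_1}^{a^k_{j_1}}, \ldots, x_{j_{r_k}}^{a^k_{j_{r_k}}} \rangle$ (with each $a^k_j$ a positive multiple of $d_j$ by (A)) has $\sqrt{Q_k} = \langle x_{j_1}, \ldots, x_{j_{r_k}} \rangle$ as the corresponding component of $\sqrt{I}$. Direct exponent bookkeeping then shows that $Q_k^s$ corresponds bijectively under $w$ to $(\sqrt{Q_k})^s$, and that $I^s$ corresponds similarly to $(\sqrt{I})^s$. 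This yields the equivalence $I^{(s)} = I^s \iff (\sqrt{I})^{(s)} = (\sqrt{I})^s$, so $\sqrt{I}$ inherits the Simis property from $I$.

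The main obstacle is claim (A). The present paper establishes it in the support-$2$ setting by exploiting the combinatorial invariants $\mu_{i,j}, \nu_{i,j}$ associated with bivariable generators, which pin down the interaction of at most two variables per generator. In the general support-$k$ case with $k \geq 3$, the construction of the witness monomial $\xi$ must simultaneously balance several primary components sharing three or more variables, and producing a monomial that certifiably lies in every $Q_k^s$ yet resists every factorization into $s$ generators of $I$ appears to require genuinely new combinatorial input beyond the bivariable framework.
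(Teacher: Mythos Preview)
The statement you are attempting to prove is presented in the paper as an open \emph{conjecture}; the paper does not prove it in full generality. What the paper establishes is the special case of support-$2$ monomial ideals (Theorem~\ref{thm: conj main thm}), via the two explicit constructions in Lemmas~\ref{lem:conj lem 1} and~\ref{lem:conj lem 2}. So there is no ``paper's own proof'' of the general statement to compare against.

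Your proposal is not a proof but a strategy, and you concede this yourself in the final paragraph: claim~(A) is the entire content of the conjecture, and you do not supply the witness monomial~$\xi$ that would establish it. Saying that $\xi$ should be ``a carefully balanced mixture of powers'' of certain generators is a description of what a proof would need, not a proof. The difficulty you identify for support~$\geq 3$ is exactly why the conjecture remains open.

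For the support-$2$ case, it is worth noting how the paper actually carries out your claim~(A): rather than working at arbitrary~$s$, both lemmas produce an explicit monomial in $I^{(2)}\setminus I^2$. Lemma~\ref{lem:conj lem 2} handles the situation where two distinct minimal generators share the same support $\{x_i,x_j\}$, taking $f=x_i^{w_{i,j}^1+w_{i,j}^2}x_j^{\max\{2w_{j,i}^1,w_{j,i}^2\}}$; Lemma~\ref{lem:conj lem 1} handles the situation where a single vertex $x_i$ carries two different exponents along two incident edges, taking $f=x_i^{\max\{w_{i,j},2w_{i,k}\}}x_j^{w_{j,i}}x_k^{w_{k,i}}$. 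Both constructions exploit the minimality of the irreducible decomposition in checking $f\in Q(P)^2$ for every minimal prime~$P$. These are precisely the ``genuinely new combinatorial inputs'' you allude to, and they are specific to the bivariable setting.

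Your claim~(B) is correct and is what the paper invokes from \cite[Corollary~5.5(a)]{mendez2024symbolic}. One small correction: once claim~(A) holds, the exponents $a_j^k$ in each irreducible component $Q_k$ are not merely positive multiples of $d_j$ but are \emph{equal} to $d_j$, since applying the weighting to the irreducible decomposition of $\sqrt{I}$ already gives an irredundant irreducible decomposition of~$I$, and such decompositions are unique.
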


This conjecture is known to hold only for edge ideals of simple graphs \cite{SimisVasconcelosVillarreal1994}, and the edge ideals of weighted oriented graphs \cite[Theorem 3.3]{GMV2024}. One of the main contributions of this article is the verification of this conjecture for the class of support-$2$ monomial ideals. 

\medskip
\noindent
\textbf{Theorem~~\ref{thm: conj main thm}.} 
    Let $I\subseteq S$ be a support-$2$ monomial ideal. Then Conjecture~\ref{conj} is true for $I$.

\medskip
Next, we consider a few specific classes of support-$2$ monomial ideals and try to understand their small symbolic powers. We show that when $G(I)=C_n$, where $n=4$ or $n\geq 6$, then $I$ is Simis if and only if $n$ is even and $I$ has a standard linear weighting (see Theorem~\ref{thm: Simis cycle} and Remark~\ref{remark: 4 cycle}). Surprisingly, this characterization is not true when $n=3,5$. Another outcome of the study of small symbolic powers is the following theorem, where we classify Cohen-Macaulay support-$2$ monomial ideals whose radical is an edge ideal of a whiskered graph. We show the following:

\medskip
\noindent
\textbf{Theorem~\ref{thm: CM whisker}.}
    Let $I\subseteq \K[x_1,\ldots ,x_m,x_{m+1},\ldots , x_{2m}]$ be a support-$2$ monomial ideal such that  $G(I)$ is the whiskered graph $W_H$ on the simple graph $H$, where $V(H)=\{x_1,\ldots , x_m\}$ and, $E(G(I))=E(H)\cup \{\{x_1,x_{m+1}\}, \ldots , \{x_m,x_{2m}\}\}.$ 
    Then, the following statements are equivalent:
    \begin{enumerate}\rm
        \item $S/I$ is Cohen-Macaulay,
        \item $I$ is unmixed,
        \item $I$ does not have any embedded primes,
        \item For all $1\leq i\leq m$, $\alpha_{i,m+i}=1$ and $w^1_{i,m+i} \geq \mu_{i,j}$ for any $1\leq j\leq m$.
    \end{enumerate}

\medskip
Finally, we give a complete combinatorial characterization when the second symbolic power of a special class of generalized edge ideals coincides with the second ordinary power.

\medskip
\noindent
\textbf{Theorem~\ref{thm:whisker second power}}.
    Let $I$ be a support-$2$ monomial ideal such that $G(I)$ is the whiskered graph $W_H$. Let $V(H)=\{x_1,\ldots , x_m\}$ and $E(G(I))=E(H)\cup \{\{x_1,x_{m+1}\}, \ldots , \{x_m,x_{2m}\}\}$. Further, assume that $I$ does not have any embedded prime, $G(I)$ is triangle-free, and $\alpha_{i,j}\leq 1$ for all $1\leq i,j\leq 2m$. Then, the following statements are equivalent:
     \begin{enumerate}\rm
         \item $ I^{(2)} = I^2$,
         \item For each edge $\{x_i,x_j\} \in E(H)$, one of the following conditions is satisfied:
         \begin{itemize}
             \item $w^1_{i,m+i}=w^1_{i,j}$ and $w^1_{j,m+j}=w^1_{j,i}$,
             \item $w^1_{i,m+i}\geq 2w^1_{i,j}$  and $w^1_{j,m+j} \geq 2w^1_{j,i}$.
         \end{itemize}
     \end{enumerate}

\medskip
This paper is organized as follows. In Section~\ref{sec: MPV conj}, we prove the Mendez-Pinto-Villarreal conjecture for support-$2$ monomial ideals. In Section~\ref{sec: small powers}, we investigate the Simis property for some classes of support-$2$ monomial ideals. As a consequence, we characterize the Cohen-Macaulayness of a special class of support-$2$ monomial ideals. Finally, in Section~\ref{sec:discussion}, we present some interesting examples that we have encountered while studying these ideals.

\section{The Mendez-Pinto-Villarreal Conjecture}\label{sec: MPV conj}

The main content of this section is the verification of Conjecture~\ref{conj} for support-$2$ monomial ideals. We first prove a few auxiliary lemmas, which are the crux of the proof of this conjecture. We will continue to use the notations and conventions that we have introduced in Section \ref{Intro_Pre}.

\begin{lemma}\label{lem:conj lem 1}
    Let $I$ be a Simis support-$2$ monomial ideal such that the irreducible decomposition of $I$ is minimal. Assume that $\alpha_{i,j}=1$ for all $\{x_i,x_j\}\in E(G(I))$. Then there is a Simis square-free monomial ideal $J$ and a standard linear weighting $w$ such that $I=J_w$.
\end{lemma}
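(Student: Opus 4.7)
The plan is to produce, under the stated hypotheses, a positive integer $d_i$ for each variable $x_i$ such that $w_{i,j}^1 = d_i$ holds for every neighbor $x_j$ of $x_i$ in $G(I)$. Once this ``vertex-consistency'' property is established, taking $w$ to be the standard linear weighting determined by $w(\mathbf{e}_i) = d_i \mathbf{e}_i$ and $J = I(G(I))$ to be the edge ideal of the underlying simple graph yields $I = J_w$ immediately, since $\alpha_{i,j} = 1$ forces the minimal generators of $I$ to be precisely the monomials $x_i^{d_i} x_j^{d_j}$ over edges $\{x_i,x_j\} \in E(G(I))$. The lemma then reduces to two sub-problems: (a) prove the vertex-consistency, and (b) show that $J$ is itself Simis.

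For (a), I would argue by contradiction. Suppose some vertex $x_i$ has neighbors $x_j, x_k$ with $a := w_{i,j}^1 < w_{i,k}^1 =: c$, and set $b := w_{j,i}^1$, $d := w_{k,i}^1$. The prototype is the case $G(I) = P_3$: for $I = \langle x_1^a x_2^b,\, x_2^c x_3^d\rangle$ with $b < c$, a direct verification shows that $x_2^{bs} x_3^{ds}$ lies in $I^{(s)} \setminus I^s$ already for $s = 2$, contradicting Simis. Emulating this, I would test the candidate $m := x_i^{as} x_k^{ds}$. Showing $m \in I^{(s)} = \bigcap_{P \in \MinAss(I)} Q(P)^s$ reduces to checking $m \in Q(P)^s$ for each minimal prime $P$, corresponding to a minimal vertex cover $C$ of $G(I)$, using the explicit description of $Q(P)$ in terms of how each edge of $G(I)$ meets $C$. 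On the other hand, $m \notin I^s$ follows from the observation that any product of $s$ generators of $I$ dividing $m$ must be supported inside $\{x_i, x_k\}$, hence (since $\alpha_{i,k} = 1$) must be $(x_i^c x_k^d)^s$, whose $x_i$-exponent $cs$ strictly exceeds $as$.

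For (b), I would show that the Simis property is reflected by standard linear weightings: if $J_w$ is Simis, then $J$ is Simis. This should follow from a direct monomial argument, as the substitution $x_i \mapsto x_i^{d_i}$ matches the generators of $J^s$ with those of $(J_w)^s$ and, similarly, the primary decomposition of $J$ with that of $J_w$, so $J^{(s)} = J^s$ is a consequence of $I^{(s)} = I^s$. The main obstacle I anticipate is in step (a): the candidate $m = x_i^{as} x_k^{ds}$ can fail to lie in $Q(P)^s$ precisely when the minimal vertex cover $C$ contains both $x_i$ and $x_j$ but not $x_k$. In that case $Q(P)$ contributes $x_i^c$ from the edge $\{x_i,x_k\}$ and $x_i^a x_j^b$ from $\{x_i,x_j\}$, and has no generator with support inside $\{x_i, x_k\}$ other than $x_i^c$; hence $(x_i^c)^s = x_i^{cs}$ is the only way to reach the $x_i$-part of $m$, and $cs > as$ defeats the containment. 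Overcoming this obstruction will require either enhancing $m$ with contributions from other edges incident to $x_j$ that force additional variables into the picture, or invoking the minimal irreducible decomposition hypothesis together with $\alpha_{i,j} = 1$ to preclude such covers from occurring.
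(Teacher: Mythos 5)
Your overall strategy (reduce to ``vertex-consistency'' of the exponents, then take $J=I(G(I))$) is the same as the paper's, but your step (a) has a genuine gap --- the one you yourself flag --- and neither of your proposed repairs works as stated. The candidate $m=x_i^{as}x_k^{ds}$ really does fail to lie in $I^{(s)}$: if $P$ is a minimal prime with $x_i,x_j\in P$ and $x_k\notin P$, then (by minimality of the irreducible decomposition) $Q(P)$ is generated by pure powers, the edge $\{x_i,x_k\}$ only forces $x_i^{c}\in Q(P)$, and the edge $\{x_i,x_j\}$ may be accounted for by $x_j^{b}\in Q(P)$ alone; since $\supp(m)\cap P=\{x_i\}$, membership $m\in Q(P)^s$ would require $x_i^{e}\in Q(P)$ with $e\le a<c$, which nothing guarantees. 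Such covers are not precluded by your hypotheses (already on a path $v_1v_2v_3v_4v_5$ with $x_j=v_2$, $x_i=v_3$, $x_k=v_4$, the cover $\{v_2,v_3,v_5\}$ is minimal), so ``invoking the minimal irreducible decomposition hypothesis to preclude such covers'' is not a viable route. The minimality hypothesis is used for something else entirely: it makes each $Q(P)$ irreducible, so that whenever a generator $x_i^{e}x_j^{f}$ of $I$ lies in $Q(P)$, one of the pure powers $x_i^{e}$, $x_j^{f}$ already lies in $Q(P)$.

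The correct repair is close to your first suggestion but more specific, and it is what the paper does: take $s=2$ and the support-$3$ witness $f=x_i^{\max\{w_{i,j},\,2w_{i,k}\}}x_j^{w_{j,i}}x_k^{w_{k,i}}$ (paper's labelling, with $w_{i,j}>w_{i,k}$). Including \emph{both} end-exponents $x_j^{w_{j,i}}$ and $x_k^{w_{k,i}}$ lets one run a short case analysis over which of $x_i^{w_{i,k}}$, $x_i^{w_{i,j}}$, $x_j^{w_{j,i}}$, $x_k^{w_{k,i}}$ lie in $Q(P)$, and in every case two of these generators divide $f$, giving $f\in Q(P)^2$. The price is that the exclusion $f\notin I^2$ is no longer automatic: since $\supp(f)=\{x_i,x_j,x_k\}$, when these three vertices form a triangle the product $(x_j^{w_{j,k}}x_k^{w_{k,j}})^2$ may divide $f$, and in that degenerate subcase the paper must pass to a second witness $f'=x_k^{\max\{w_{k,i},\,2w_{k,j}\}}x_i^{w_{i,k}}x_j^{w_{j,k}}$. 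Your proposal addresses neither the construction of the correct witness nor the triangle case, so step (a) is not complete. Your step (b) (transferring the Simis property from $I=J_w$ back to $J$) is fine in spirit and matches what the paper delegates to the cited results on standard linear weightings.
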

\begin{proof}
    For simplicity, we write $w_{i,j}$ instead of $w_{i,j}^1$ throughout the proof. By way of contradiction, let us assume that $I\neq J_w$ for any standard linear weighting $w$. Then there exists a vertex $x_i\in V(G(I))$ such that $w_{i,j}\neq w_{i,k}$ for some edges $\{x_i,x_j\}, \{x_i,x_k\}\in E(G(I))$. Without loss of generality, we assume that $w_{i,j}>w_{i,k}$, and set 
    \[
    f=x_i^{\max \{w_{i,j}, 2w_{i,k}\}}x_j^{w_{j,i}} x_k^{w_{k,i}}.
    \]
    We claim that $f\in I^{(2)}$. Indeed, we claim that for every minimal prime $P$ of $I$, $f\in Q(P)^2$, where $Q(P)$ is the primary component of $I$ corresponding to $P$. R2C4: We proceed by considering the following cases. First, assume that $x_{i}^{w_{i,k}}\in Q(P)$. Then it is evident that $f\in Q(P)^2$. Next, if $x_{i}^{w_{i,k}}\notin Q(P)$, since the irreducible decomposition of $I$ is minimal, it follows that $x_{k}^{w_{k,i}}\in Q(P)$. Now, if $x_{i}^{w_{i,j}}\in Q(P)$, then again we conclude that $f\in Q(P)^2$. On the other hand, if $x_{i}^{w_{i,j}}\notin Q(P)$, then by the same reason, we have $x_{j}^{w_{j,i}}\in Q(P)$, and hence, $f\in Q(P)^2$. Therefore, in any case $f\in Q(P)^2$ and this completes the proof of our claim.

    %First, let us assume that $P$ does not contain the vertex $x_i$. Then, we have $x_j^{w_{j,i}}, x_k^{w_{k,i}}\in Q(P)$. So in this case, we have $f\in Q(P)^2$. If $x_i\in P$, we consider the following cases. First, assume that $x_j,x_k\notin P$. Then $x_i^{w_{i,k}}\in Q(P)$, and hence, $f\in Q(P)^2$. Secondly, if $x_j\in P$ and $x_k\notin P$, then again $x_i^{w_{i,k}}\in Q(P)$ and we are through. Next, if $x_j\notin P$ and $x_k\in P$, then $x_i^{w_{i,j}}\in Q(P)$. Note that, in this case, if again $x_i^{w_{i,k}}\in Q(P)$, then we are through. Now, if we assume that $x_i^{w_{i,k}}\notin Q(P)$, as $Q(P)$ is irreducible and $x_i^{w_{i,k}}x_k^{w_{k,i}}\in Q(P)$, we conclude that $x_k^{w_{k,i}}\in Q(P)$. Hence, in this case as well, $f\in Q(P)^2$. Finally, assume that both $x_j,x_k\in P$. Again, in this case, if $x_i^{w_{i,k}}\in Q(P)$, then we are through. So, assume that $x_i^{w_{i,k}}\notin Q(P)$. Since $Q(P)$ is irreducible and $x_i^{w_{i,k}}x_k^{w_{k,i}}\in Q(P)$, we conclude that $x_k^{w_{k,i}}\in Q(P)$. Note that $x_k^{w_{k,i}}\mid f$ and $x_i^{w_{i,j}}x_j^{w_{j,i}}\mid f$, and hence, $f\in Q(P)^2.$ This completes the proof of our claim.

    Next, we are going to show that $I^{(2)}\neq I^2$. For this, we consider the following two cases:

    \noindent \textsc{Case I}: Assume that the induced subgraph of $G(I)$ on the vertices $x_i,x_j,x_k$ is not the $3$-cycle $C_3$. Then, $\{x_j,x_k\}\notin E(G(I))$. We claim that $f\notin I^2$. Indeed, if $f\in I^2$, then either one of the following should hold: (A) $(x_i^{w_{i,j}}x_j^{w_{j,i}})^2\mid f$, (B) $(x_i^{w_{i,j}}x_j^{w_{j,i}})\cdot (x_i^{w_{i,k}}x_k^{w_{k,i}}) \mid f$, (C) $(x_i^{w_{i,k}}x_k^{w_{k,i}})^2\mid f$. Note that, (A) is an impossibility, as $x_i^{2w_{i,j}}\nmid f$; (B) is also not possible, since $x_i^{w_{i,j}+w_{i,k}}\nmid f$; and (C) is not true because of $x_k^{2w_{k,i}}\nmid f$.

    \noindent \textsc{Case II}: Assume that  the vertices $x_i,x_j,x_k$ form a $3$-cycle in $G(I)$. Thus, in this case we have $\{x_j,x_k\}\in E(G(I)).$ Now, to see whether $f\in I^2$ or not, we have to consider the following in addition to \textsc{Case I}: (D) $(x_j^{w_{j,k}}x_k^{w_{k,j}})^2\mid f$, (E) $(x_i^{w_{i,j}}x_j^{w_{j,i}})\cdot (x_j^{w_{j,k}}x_k^{w_{k,j}}) \mid f$, (F) $(x_i^{w_{i,k}}x_k^{w_{k,i}})\cdot (x_j^{w_{j,k}}x_k^{w_{k,j}})\mid f$. Note that both (E) and (F) are impossibilities, as $x_j^{w_{j,i}+w_{j,k}}\nmid f$ and $x_k^{w_{k,i}+w_{k,j}}\nmid f$. Therefore, we can conclude that $f\in I^2$ if and only if (D) holds, that is, $(x_j^{w_{j,k}}x_k^{w_{k,j}})^2\mid f$. Now, if $(x_j^{w_{j,k}}x_k^{w_{k,j}})^2\nmid f$ then $f\notin I^2$ and we are through. Otherwise, if $(x_j^{w_{j,k}}x_k^{w_{k,j}})^2\mid f$, then we have $w_{j,i}\geq 2w_{j,k}$ and $w_{k,i}\geq 2w_{k,j}$. In this case, since $w_{k,i}>w_{k,j}$, we consider the monomial 
    \[
    f'=x_k^{\max \{w_{k,i}, 2w_{k,j}\}}x_i^{w_{i,k}} x_j^{w_{j,k}}.
    \]
    Then, by the claim above, it follows that $f'\in I^{(2)}$. On the other hand, by similar arguments as before, $f'\in I^2$ if and only if $(x_i^{w_{i,j}}x_j^{w_{j,i}})^2\mid f'$. But this is an impossibility, as $w_{i,k}< w_{i,j}$. This completes the proof of the fact that $I^{(2)}\neq I^2$, which is a contradiction to the hypothesis that $I$ is~ Simis. 
\end{proof}
    
\begin{lemma}\label{lem:conj lem 2}
    Let $I\subseteq S$ be a support-$2$ monomial ideal such that the irreducible decomposition of $I$ is minimal. If  $\alpha_{i,j}\geq 2$  for some $1\leq i,j\leq n$, then $I^{(2)}\neq I^2$.
\end{lemma}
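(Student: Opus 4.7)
The plan is to construct an explicit monomial $f \in I^{(2)}\setminus I^2$. Using $\alpha_{i,j}\ge 2$, let $u_1=x_i^{w_{i,j}^1}x_j^{w_{j,i}^1}$ and $u_2=x_i^{w_{i,j}^2}x_j^{w_{j,i}^2}$ be the first two minimal generators of $T_{i,j}$ under the paper's convention, so that $w_{i,j}^1>w_{i,j}^2$ and $w_{j,i}^2>w_{j,i}^1$. I set
\[
A:=\max\bigl(w_{i,j}^1,\,2w_{i,j}^2\bigr),\qquad B:=\max\bigl(w_{j,i}^2,\,2w_{j,i}^1\bigr),\qquad f:=x_i^A\, x_j^B.
\]

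To verify $f\notin I^2$, I use that $\supp(f)=\{x_i,x_j\}$, so every factorization $f=g_1g_2$ with $g_1,g_2\in I$ must have $g_1,g_2\in T_{i,j}$. A quick case analysis on $(s,t)\in\{1,\ldots,k\}^2$ then shows that no product $u_s u_t$ divides $f$: when $s=t=1$, $w_{i,j}^s+w_{i,j}^t=2w_{i,j}^1>A$; when exactly one of $s,t$ equals $1$, $w_{j,i}^s+w_{j,i}^t\ge w_{j,i}^1+w_{j,i}^2>B$; and when $s,t\ge 2$, $w_{j,i}^s+w_{j,i}^t\ge 2w_{j,i}^2>B$. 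These three inequalities are immediate from the definitions of $A$ and $B$ together with $w_{i,j}^1>w_{i,j}^2$ and $w_{j,i}^1<w_{j,i}^2$.

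To verify $f\in I^{(2)}$, I check $f\in Q(P)^2$ for every minimal prime $P$ of $I$. By the hypothesis that the irreducible decomposition of $I$ is minimal, the primary component $Q(P)=\langle x_\ell^{\tau_\ell}:\ell\in C\rangle$ is irreducible, where $C$ is the minimal vertex cover of $G(I)$ corresponding to $P$. The containment $I\subseteq Q(P)$ imposes, for each support-$2$ generator with both (resp.\ only one) endpoints in $C$, a disjunctive (resp.\ single) constraint on the $\tau_\ell$. I split the analysis according to the incidence of $\{i,j\}$ with $C$. In Case A ($i\in C,\ j\notin C$), the single constraints coming from every $u_t\in T_{i,j}$ force $\tau_i\le w_{i,j}^k\le w_{i,j}^2$, so $x_i^{2\tau_i}$ divides $x_i^A$ and hence $f$. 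Case B is symmetric. In Case C ($i,j\in C$), I apply the disjunctive constraints from just $u_1$ and $u_2$: if $\tau_i>w_{i,j}^1$, then $u_1$ forces $\tau_j\le w_{j,i}^1$ and $x_j^{2\tau_j}\mid f$; if $\tau_j>w_{j,i}^2$, then $u_2$ forces $\tau_i\le w_{i,j}^2$ and $x_i^{2\tau_i}\mid f$; otherwise $\tau_i\le w_{i,j}^1\le A$ and $\tau_j\le w_{j,i}^2\le B$, and the generator $(x_i^{\tau_i})(x_j^{\tau_j})$ of $Q(P)^2$ divides $f$.

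The main obstacle is Case C, and it is precisely where the hypothesis of a minimal irreducible decomposition is essential: without it the primary component at a minimal prime need not be a single irreducible and the clean description of the exponents $(\tau_\ell)$ would break down. The delicate point is recognizing that the disjunctive constraints from just the two generators $u_1$ and $u_2$ already confine $(\tau_i,\tau_j)$ to a region on which one of the three divisibility options for $Q(P)^2$-membership must apply; additional generators $u_3,\ldots,u_k$ only tighten this confinement further and do no harm. Cases A and B, along with the non-membership of $f$ in $I^2$, reduce to the four numerical inequalities built directly into the definition of $(A,B)$.
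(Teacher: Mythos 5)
Your proof is correct and follows essentially the same strategy as the paper's: exhibit an explicit monomial supported on $\{x_i,x_j\}$ lying in $I^{(2)}\setminus I^2$, checking membership in each $Q(P)^2$ by a case analysis on how the corresponding minimal vertex cover meets $\{i,j\}$ and using the irreducibility of the primary components, and ruling out membership in $I^2$ because any relevant product of two generators must come from $T_{i,j}$. The only difference is the choice of witness --- the paper uses $x_i^{w_{i,j}^1+w_{i,j}^2}x_j^{\max\{2w_{j,i}^1,\,w_{j,i}^2\}}$ while you use $x_i^{\max\{w_{i,j}^1,\,2w_{i,j}^2\}}x_j^{\max\{w_{j,i}^2,\,2w_{j,i}^1\}}$ --- and both choices work.
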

\begin{proof}
   For simplicity, we set $\alpha_{i,j}=k$ where $\{x_i,x_j\}\in E(G(I))$. Take $$T_{i,j}=\{x_i^{w_{i,j}^1}x_j^{w_{j,i}^1},\ldots , x_i^{w_{i,j}^k}x_j^{w_{j,i}^k}\}\subseteq \G(I)$$ to be the set of all minimal monomial generators whose support is $\{x_i,x_j\}$. We also assume that $w_{i,j}^1>w_{i,j}^2>\cdots > w_{i,j}^k$ and $w_{j,i}^1<w_{j,i}^2<\cdots < w_{j,i}^k$. Now, consider the monomial 
    \[
    f=x_i^{w_{i,j}^1+w_{i,j}^2}x_j^{\max \{2w_{j,i}^1, w_{j,i}^2\}}.
    \]
    We claim that $f\in I^{(2)}\setminus I^2$. To see $f\in I^{(2)}$, consider any minimal prime ideal $P$ of the ideal $I$, and $Q(P)$ be the primary component corresponding to $P$. First, assume that $x_i\notin P$. As $x_i^{w_{i,j}^1}x_j^{w_{j,i}^1}\in \mathcal{G}(I)$, we have $x_j^{w_{j,i}^1}\in Q(P)$. Note that $x_j^{2w_{j,i}^1}\mid f$, and hence $f\in Q(P)^2$. Secondly, if $x_j\notin P$, then since $x_i^{w_{i,j}^2}x_j^{w_{j,i}^2}\in \mathcal{G}(I)$, we have $x_i^{w_{i,j}^2}\in Q(P)$. Now, $w_{i,j}^1+w_{i,j}^2 > 2w_{i,j}^2$, and therefore, $x_i^{2w_{i,j}^2}\mid f$, which in turn, implies that $f\in Q(P)^2$. Finally, assume that both $x_i, x_j \text{ are in } P$. Since the irreducible decomposition of $I$ is minimal, $x_i^{w_{i,j}^\ell}x_j^{w_{j,i}^\ell}\notin \mathcal{G}(Q(P))$ for all $1\leq \ell\leq k$.
    Let $1\leq t\leq k$ be the smallest integer such that $x_i^{w_{i,j}^{t}}\notin Q(P)$. Firstly, if $t=1$, then  $x_i^{w_{i,j}^1}\notin Q(P)$. Since $Q(P)$ is irreducible, $x_j^{w_{j,i}^1}\in Q(P)$. As $x_j^{2w_{j,i}^1}\mid f$, we have $f\in Q(P)^2$. Secondly, if $t=2$, then $x_i^{w_{i,j}^1}\in Q(P)$, but $x_i^{w_{i,j}^2}\notin Q(P)$. Then since $x_i^{w_{i,j}^2}x_j^{w_{j,i}^2}\in \mathcal{G}(I)$ and $Q(P)$ is irreducible, $x_j^{w_{j,i}^2}\in Q(P)$. Note that, $x_i^{w_{i,j}^1}\cdot x_j^{w_{j,i}^2}\mid f$ and therefore, $f\in Q(P)^2$. Finally, if $t\geq 3$, then $x_i^{w_{i,j}^{t}}\notin Q(P)$ and $x_i^{w_{i,j}^{t-1}}\in Q(P)$. Note that for $t\geq 3$, $w_{i,j}^{1}+w_{i,j}^{2}> 2w_{i,j}^{t-1}$. This implies that $x_i^{2w_{i,j}^{t-1}}\mid f$, and hence, $f\in Q(P)^2$. This concludes the proof of our claim.

    It now remains to show that $f\notin I^2$. We have two possible choices of the monomial $f$, namely, (A) $f=x_i^{w_{i,j}^1+w_{i,j}^2}x_j^{2w_{j,i}^1}$, (B) $f=x_i^{w_{i,j}^1+w_{i,j}^2}x_j^{w_{j,i}^2}$. %(C) $f=x_i^{2w_{i,j}^k}x_j^{2w_{j,i}^1}$, and (D) $f=x_i^{2 w_{i,j}^k}x_j^{w_{j,i}^2}$. 
    In each of the above cases, it is straightforward to verify that $f\notin I^2$. Indeed, if $f=x_i^{w_{i,j}^1+w_{i,j}^2}x_j^{2w_{j,i}^1}$, then since the exponent of $x_j$ in $f$ is $2w_{j,i}^1$, and $w_{j,i}^1< w_{j,i}^\ell$ for all $2\leq \ell\leq k$, $f\in I^2$ only when $(x_i^{w_{i,j}^1}x_j^{w_{j,i}^1})^2\mid f$. But, $x_i^{2w_{i,j}^1}\nmid f$ as $w_{i,j}^1+w_{i,j}^2< 2w_{i,j}^1$. Secondly, if $f=x_i^{w_{i,j}^1+w_{i,j}^2}x_j^{w_{j,i}^2}$, then observe that $x_i^{w_{i,j}^\ell}x_j^{w_{j,i}^\ell}\mid f$ if and only if $\ell=1,2$. Therefore, if $g_1g_2\mid f$ for some $g_1,g_2\in \mathcal{G}(I)$, then it follows that $g_1,g_2\in \{x_i^{w_{i,j}^1}x_j^{w_{j,i}^1}, x_i^{w_{i,j}^2}x_j^{w_{j,i}^2}\}$. Since the exponent of the variable $x_j$ in $f$ is $w_{j,i}^2$ and $w_{j,i}^1<w_{j,i}^2$, it follows that $g_1=g_2=x_i^{w_{i,j}^1}x_j^{w_{j,i}^1}$. Again, this is a contradiction since $x_i^{2w_{i,j}^1}\nmid f$. 
\end{proof}

It is to be noted that Lemma \ref{lem:conj lem 1} and Lemma \ref{lem:conj lem 2} do not hold if one drops the condition that the irreducible decomposition is minimal (see Example \ref{example:1} and Example \ref{example:5}).

\begin{thm}\label{thm: conj main thm}
     Let $I$ be a support-$2$ monomial ideal such that the irreducible decomposition of $I$ is minimal and $I$ does not have any embedded primes. Then $I$ is a Simis ideal if and only if $G(I)$ is a bipartite graph and $I$ has a standard linear weighting. In particular, if $I\subseteq S$ is a support-$2$ monomial ideal, then Conjecture~\ref{conj} is true for $I$.
\end{thm}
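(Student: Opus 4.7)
The plan is to deduce the theorem from Lemma~\ref{lem:conj lem 1}, Lemma~\ref{lem:conj lem 2}, and the classical characterization of Simis edge ideals due to Simis, Vasconcelos, and Villarreal \cite{SimisVasconcelosVillarreal1994}.

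For the forward direction, assume $I$ is Simis, so in particular $I^{(2)}=I^{2}$. By the contrapositive of Lemma~\ref{lem:conj lem 2}, $\alpha_{i,j}=1$ for every edge $\{x_{i},x_{j}\}\in E(G(I))$. With this in place, Lemma~\ref{lem:conj lem 1} produces a Simis square-free monomial ideal $J$ and a standard linear weighting $w$ such that $I=J_{w}$, which already exhibits a standard linear weighting on $I$. Because $I$ is support-$2$ with $\alpha_{i,j}=1$ throughout, $J$ is forced to be the edge ideal $I(G(I))$. The theorem of Simis, Vasconcelos, and Villarreal then translates the Simis property of the edge ideal $J$ into bipartiteness of $G(J)=G(I)$, yielding both conclusions of the forward direction.

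For the backward direction, suppose $G(I)$ is bipartite and $I$ admits a standard linear weighting $w$, so that $I=J_{w}$ with $J=I(G(I))$. The Simis-Vasconcelos-Villarreal theorem guarantees that $J$ is Simis. It then suffices to invoke (or, if not cleanly available, verify directly) the fact from \cite{mendez2024symbolic} that standard linear weightings preserve the Simis property: for $J$ square-free Simis and $w$ standard with weights $d_{1},\ldots,d_{n}$, the minimal irreducible decomposition $J=\bigcap_{\ell}\langle x_{i}:i\in A_{\ell}\rangle$ transports via the substitution $x_{i}\mapsto x_{i}^{d_{i}}$ to a minimal irreducible decomposition $J_{w}=\bigcap_{\ell}\langle x_{i}^{d_{i}}:i\in A_{\ell}\rangle$, after which one checks the chain $(J_{w})^{(s)}=(J^{(s)})_{w}=(J^{s})_{w}=(J_{w})^{s}$. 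The "in particular" assertion is immediate from the forward direction: under the hypotheses of Conjecture~\ref{conj}, one produces exactly the required Simis square-free ideal $J=I(G(I))$ and standard weighting $w$ with $I=J_{w}$.

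The main obstacle I anticipate is the backward direction, specifically the compatibility of ordinary powers with the weighting operation: the symbolic side $(J_{w})^{(s)}=(J^{(s)})_{w}$ follows formally from how primary components transform under $x_{i}\mapsto x_{i}^{d_{i}}$, but the ordinary side $(J^{s})_{w}=(J_{w})^{s}$ requires a careful monomial-by-monomial argument, since the weighting is not a ring homomorphism on all of $S$ — only its action on monomials is multiplicative. The forward direction, by contrast, is a fairly direct repackaging of Lemmas~\ref{lem:conj lem 1} and \ref{lem:conj lem 2} together with the bipartite edge-ideal characterization.
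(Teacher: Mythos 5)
Your proposal is correct and follows essentially the same route as the paper: the paper likewise deduces $\alpha_{i,j}=1$ from Lemma~\ref{lem:conj lem 2}, applies Lemma~\ref{lem:conj lem 1} for the forward direction, and for the converse cites the Simis--Vasconcelos--Villarreal characterization of bipartite edge ideals together with \cite[Corollary 5.5(a)]{mendez2024symbolic}, which is exactly the preservation-under-weighting fact you flag as the only delicate point.
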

\begin{proof}
     The `if' part follows from the fact that edge ideals of bipartite graphs are Simis, and for any standard linear weighting $w$, the weighted monomial ideal of a Simis ideal is again a Simis ideal (see \cite[Corollary 5.5(a)]{mendez2024symbolic}). For the `only if' part, as the irreducible decomposition of $I$ is minimal, it follows from Lemma~\ref{lem:conj lem 2} that if $I$ is a Simis ideal, then $\alpha_{i,j}= 1$ for all $\{x_i,x_j\}\in E(G(I))$. Hence, the assertion follows from Lemma~\ref{lem:conj lem 1}.
\end{proof}

%\textcolor{red}{R2C15: It is to be noted that Lemma \ref{lem:conj lem 1} and Lemma \ref{lem:conj lem 2} do not hold if one drops the condition that the irreducible decomposition is minimal; (see Example \ref{example:1} and Example \ref{example:5}).}

\section[Simis Property of Support-2 Monomial Ideals]{Simis Property of Support-$2$ Monomial Ideals}\label{sec: small powers}

In this section, we study some particular classes of support-$2$ monomial ideals and try to understand when their symbolic powers coincide with the ordinary powers. In the context of Conjecture~\ref{conj}, it is known that the conjecture does not hold if we drop the condition that the irreducible decomposition is minimal. Thus, one may ask the following general, but astoundingly difficult question: for support-$2$ monomial ideals, what is the statement of Conjecture~\ref{conj} if one drops the condition that the irreducible decomposition is minimal? In the remainder of the article, we attempt to answer this question for specific classes of support-$2$ monomial ideals. Although the overall goal is to characterize Simis ideals, the first step to achieve this is to identify the obstructions for an ideal to be Simis. In this section, we make a detailed study about when an ideal fails to be Simis in the first step, that is, when $I^{(1)}\neq I^1$. This is equivalent to saying that when the ideal $I$ has embedded primes. In the following result, we give a necessary condition on when a support-$2$ monomial ideal has an embedded prime.

\begin{prop}\label{prop:leaf embedded}
    Let $I$ be a support-$2$ monomial ideal and $G(I)$ be its underlying simple graph. Suppose that there is $\{x_i,x_j\} \in E(G(I))$ that satisfies the property that any minimal vertex cover of $G(I)$ either contains $x_i$ or $x_j$, but not both. If there is some minimal monomial generator $u \in \G(I)$ such that $x_i^{\nu_{i,j}+1} \mid u$ or $x_j^{\nu_{j,i}+1}\mid u$, then $I^{(1)} \ne I.$
    \begin{proof}
        Without loss of generality, we can assume that $x_i^{\nu_{i,j}+1}\mid u$. Let us suppose that $u$ corresponds to $\{x_i,x_k\} \in E(G(I))$. We take $f=x_i^{\nu_{i,j}}x_k^{\nu_{k,i}}$. Note that $f\mid u$, $f\neq u$, and so $f\notin I$. Now, let $P$ be any minimal prime ideal of $I$. If $x_i\in P$ then by the given condition, $x_j\notin P$, and therefore, $x_i^{\nu_{i,j}}\in Q(P)$. On the other hand, if $x_i\notin P$, then $x_k\in P$, and so $x_k^{\nu_{k,i}}\in Q(P)$. Thus, in any case $f\in Q(P)$, and hence, $f\in I^{(1)}$. 
    \end{proof}
\end{prop}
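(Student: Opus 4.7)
The plan is to construct a monomial $f$ that lies in $I^{(1)}$ but not in $I$, which immediately gives $I^{(1)}\neq I$. By the symmetric roles of $x_i$ and $x_j$ in the hypothesis, I may assume $x_i^{\nu_{i,j}+1}\mid u$. Since $u$ is a support-$2$ generator containing $x_i$, I can write $u=x_i^{w_{i,k}^\ell}x_k^{w_{k,i}^\ell}\in T_{i,k}$ for some edge $\{x_i,x_k\}\in E(G(I))$ and some index $\ell$, so that $w_{i,k}^\ell\geq \nu_{i,j}+1$. The candidate I propose is
\[
f=x_i^{\nu_{i,j}}\,x_k^{\nu_{k,i}},
\]
built from the minimum exponent of $x_i$ among generators in $T_{i,j}$ and the minimum exponent of $x_k$ among generators in $T_{i,k}$.

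To see $f\notin I$, I would note that $\nu_{k,i}\leq w_{k,i}^\ell$ and $\nu_{i,j}<w_{i,k}^\ell$, so $f$ strictly divides $u$. If $f$ were in $I$, some minimal generator $v\in\G(I)$ would divide $f$; but then $v\mid u$ with $u\in\G(I)$ would force $v=u$ by minimality, contradicting $f\neq u$.

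For $f\in I^{(1)}$, I would use that the minimal primes of $I$ correspond bijectively to the minimal vertex covers of $G(I)$ (since $\sqrt{I}=I(G(I))$), and that for a minimal vertex cover $C$ the primary component $Q(P_C)$ is generated by the monomials obtained from each minimal generator of $I$ after deleting the variables outside $C$. By hypothesis, every minimal vertex cover $C$ contains exactly one of $x_i,x_j$. If $x_i\in C$ and $x_j\notin C$, then deleting $x_j$ from each generator in $T_{i,j}$ contributes $x_i^{\nu_{i,j}}$ to $Q(P_C)$, which divides $f$. If instead $x_j\in C$ and $x_i\notin C$, then the vertex-cover property applied to the edge $\{x_i,x_k\}$ forces $x_k\in C$, and deleting $x_i$ from each generator in $T_{i,k}$ contributes $x_k^{\nu_{k,i}}$ to $Q(P_C)$, which again divides $f$.

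The main obstacle, as I see it, is guessing the correct shape of $f$: one must pair the minimum $x_i$-exponent from the distinguished edge $\{x_i,x_j\}$ with the minimum $x_k$-exponent from the edge housing $u$, so that a suitable divisor of $f$ is produced in $Q(P_C)$ regardless of which side of the vertex-cover dichotomy $C$ realizes on $\{x_i,x_j\}$. Once this candidate is identified, both verifications are short bookkeeping arguments about minimum exponents.
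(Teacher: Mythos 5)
Your proposal is correct and follows essentially the same route as the paper: the same witness $f=x_i^{\nu_{i,j}}x_k^{\nu_{k,i}}$, the same proper-divisibility argument for $f\notin I$, and the same case split on whether a minimal vertex cover contains $x_i$ or $x_j$ (using the localization description of $Q(P)$) for $f\in I^{(1)}$. No gaps; you merely spell out a few steps the paper leaves implicit.
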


Let $G$ be a simple graph, and $x\in V(G)$. The \textit{(open) neighbourhood} of $x$, denoted by $N_G(x)$, is the set $\{ y\in V(G)\mid \{x,y\}\in E(G)\}$. The \emph{degree} of the vertex $x$ is given by $\mathrm{deg}(x)=|N_G(x)|$. A \textit{leaf} in $G$ is a vertex of degree $1$. If $x_i\in V(G)$ is a leaf vertex and $x_j\in N_G(x_i)$, then we say that the edge $\{x_i,x_j\}\in E(G)$ is a leaf edge.

\begin{rmk}\label{remark:one gen}
    We can immediately make the following remarks:
    \begin{itemize}
        \item[(a)] If $\{x_i,x_j\}$ is a leaf edge, then it satisfies the hypothesis of Proposition~\ref{prop:leaf embedded}. 
        \item[(b)] If $\{x_i,x_j\} \in E(G(I))$ satisfies the hypothesis of Proposition~\ref{prop:leaf embedded}, and $I$ does not have an embedded prime, then $\alpha_{i,j} = 1.$
    \end{itemize}
\end{rmk}

The \textit{girth} of a simple graph $G$ is the length of the shortest cycle contained in the graph. For any two vertices $x_i, x_j\in V(G)$, we denote by $d(x_i,x_j)$ the length of the shortest path between $x_i$ and ~$x_j$.
\begin{prop}\label{girth_6_embdd}
    Let $I$ be a support-$2$ monomial ideal and assume that the girth of $G(I)$ is at least six. Let $\{x_i,x_j\} \in E(G(I))$ be such that $\alpha_{i,j} \ge 2$. Suppose that either $d(x_i,x_\ell) \ge 2$ or $d(x_j,x_\ell) \ge 2$ for any leaf vertex $x_\ell \in V(G(I))$, then $I^{(1)} \ne I.$
\end{prop}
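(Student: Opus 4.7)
The plan is to construct a monomial $f\in I^{(1)}\setminus I$, which will show $I^{(1)}\ne I$ (equivalently, $I$ has an embedded associated prime). Since $\alpha_{i,j}\ge 2$, the set $T_{i,j}$ contains the two distinct minimal generators $x_i^{\mu_{i,j}}x_j^{\nu_{j,i}}$ and $x_i^{\nu_{i,j}}x_j^{\mu_{j,i}}$, with $\mu_{i,j}>\nu_{i,j}$ and $\mu_{j,i}>\nu_{j,i}$. The leaf hypothesis, combined with the girth $\ge 6$ condition, forces $x_i$ and $x_j$ to have degree at least $2$, and further ensures that the open neighbourhoods $N_{G(I)}(x_i),N_{G(I)}(x_j)$ are disjoint and that no $4$- or $5$-cycle passes through the edge $\{x_i,x_j\}$.

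Take $f_0:=x_i^{\nu_{i,j}}x_j^{\nu_{j,i}}$ as a first candidate. Exactly as in the proof of Lemma~\ref{lem:conj lem 2}, no minimal generator of $I$ divides $f_0$: any such generator would be supported on $\{x_i,x_j\}$, and the strict monotonicity of $(w_{i,j}^\ell)$ and $(w_{j,i}^\ell)$ would force the same index $\ell$ to equal both $1$ and $\alpha_{i,j}\ge 2$, which is impossible. Hence $f_0\notin I$. To verify $f_0\in I^{(1)}=\bigcap_{P\in \MinAss(I)} Q(P)$ I check $f_0\in Q(P)$ for each minimal prime $P$ of $I$, equivalently for each minimal vertex cover $C$ of $G(I)$. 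If exactly one of $x_i,x_j$ lies in $C$, then $Q(P)$ contains $x_i^{\nu_{i,j}}$ or $x_j^{\nu_{j,i}}$ (coming from the edge $\{x_i,x_j\}$), which visibly divides $f_0$.

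The main obstacle is the case where both $x_i,x_j\in C$. Here no generator of $Q(P)$ supported on $\{x_i,x_j\}$ divides $f_0$ (by the same contradiction as above), so a divisor of $f_0$ must come from another edge $\{x_r,x_s\}$ with $x_r\in C$ and $x_s\notin C$; alternatively, I plan to augment $f_0$ with factors $x_r^{\nu_{r,s}}$ to force such a divisor to appear. Minimality of $C$ together with $\deg(x_i),\deg(x_j)\ge 2$ yields $x_p\in N_{G(I)}(x_i)\setminus C$ and $x_q\in N_{G(I)}(x_j)\setminus C$ with $p,q\ne i,j$, and the girth $\ge 6$ hypothesis then forces $x_p,x_q$ to be distinct, non-adjacent, and to have no common neighbour. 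I intend to use this structural rigidity to choose augmenting factors uniformly over all minimal covers containing both $x_i$ and $x_j$; the fact that the augmenting vertices lie at distance at least $2$ from $\{x_i,x_j\}$ (another consequence of the girth condition) prevents any generator of $I$ involving $x_i$ or $x_j$ from dividing the augmented $f$, so $f\notin I$ is maintained. The hardest step will be orchestrating a single choice of $f$ that works simultaneously for every minimal cover containing both $x_i$ and $x_j$; the girth $\ge 6$ hypothesis is precisely what provides the combinatorial slack to make this possible.
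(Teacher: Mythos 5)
Your overall strategy---start from $x_i^{\nu_{i,j}}x_j^{\nu_{j,i}}$, which lies outside $I$ because $\alpha_{i,j}\ge 2$, and multiply in extra factors to handle minimal covers containing both $x_i$ and $x_j$---is exactly the paper's, and your treatment of the cases where the cover contains only one of $x_i,x_j$ is correct. But your argument stops at the one step that carries all the weight: you write that you ``intend to'' choose augmenting factors and that ``the hardest step will be orchestrating a single choice of $f$,'' without producing that choice or verifying it. That is a genuine gap, not a routine detail, because the entire content of the proposition is that such a uniform choice exists.

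The missing construction is
\[
f \;=\; x_i^{\nu_{i,j}}x_j^{\nu_{j,i}}\prod_{\substack{x_k\in N_{G(I)}(x_i)\setminus\{x_j\}\\ x_r\in N_{G(I)}(x_k)\setminus\{x_i\}}} x_r^{\nu_{r,k}}.
\]
Note that the augmenting vertices are not the neighbours $x_p\in N_{G(I)}(x_i)\setminus C$, $x_q\in N_{G(I)}(x_j)\setminus C$ that you single out, but the \emph{second} neighbours of $x_i$, taken over \emph{all} $x_k\in N_{G(I)}(x_i)\setminus\{x_j\}$ simultaneously---that is precisely how one monomial serves every cover. Given a minimal cover $C$ containing both $x_i$ and $x_j$, minimality forces some $x_k\in N_{G(I)}(x_i)$ with $x_k\notin C$; then $N_{G(I)}(x_k)\subseteq C$, and the leaf-distance hypothesis guarantees $x_k$ is not a leaf, so some $x_r\in N_{G(I)}(x_k)\setminus\{x_i\}$ lies in $C$ and the factor $x_r^{\nu_{r,k}}$, already present in $f$, lies in $Q(P)$. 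For $f\notin I$ one must then check that no generator supported on $\{x_i,x_r\}$, $\{x_j,x_r\}$, or $\{x_r,x_s\}$ divides $f$; this is where girth $\ge 6$ enters, since each such edge would close a cycle of length at most $5$ through $x_i$. Your sketch names these ingredients in roughly the right places, but until the product above (or an equivalent) is written down and these two verifications are carried out, the argument remains a plan rather than a proof.
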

\begin{proof}
    Without loss of generality, let us assume that $d(x_i,x_\ell) \ge 2$ for any leaf vertex $x_\ell \in V(G(I))$. It suffices to find a monomial in the first symbolic power of $I$ which is not in $I$. Let us consider the ~monomial
    \[
    f =x_i^{\nu_{i,j}}x_j^{\nu_{j,i}}\prod_{\substack {x_k \in N_{G(I)}(x_i) \setminus \{x_j\}\\ x_r \in N_{G(I)}(x_k)\setminus \{x_i\}}} x_r^{\nu_{r,k}}.
    \]
    We claim that $f\in I^{(1)}\setminus I$. To prove $f\in I^{(1)}$, it is enough to show that for any minimal prime ideal $P$ of $I$, $f\in Q(P)$. There are three possibilities. First, assume that $x_i\in P$ and $x_j\notin P$, then $x_i^{\nu_{i,j}}\in Q(P)$, and thus $f\in Q(P)$. Secondly, if $x_j\in P$ and $x_i\notin P$, by the symmetry we also obtain that $f\in Q(P)$. Finally, if $x_i,x_j\in P$, then since the minimal monomial set of generators of $P$ is a minimal vertex cover of $G(I)$, there is some $x_k\in N_{G(I)}(x_i)$ such that $x_k\notin P$. This implies that $N_{G(I)}(x_k)\subseteq P$, and since $d(x_i,x_\ell) \ge 2$ for any leaf vertex $x_\ell \in V(G(I))$, we have $N_{G(I)}(x_k)\setminus \{x_i\}\neq \emptyset $. Now, take any $x_r\in N_{G(I)}(x_k)\setminus \{x_i\}$, and since, $x_r\in P, x_k\notin P$, we have $x_r^{\nu_{r,k}}\in Q(P)$. Thus, in any case, we have $f\in Q(P)$ and hence, $f\in I^{(1)}$.
    
    It now remains to show that $f\notin I$. Observe that the condition $\alpha_{i,j}\geq 2$ ensures that $x_i^{\nu_{i,j}}x_j^{\nu_{j,i}}\notin I$. Now, it is enough to show that $\{x_i, x_r\}, \{x_j,x_r\}, \{x_r,x_{s}\}\notin E(G(I))$ for any $x_r\in N_{G(I)}(x_{k_1})\setminus \{x_i\}, x_{s}\in N_{G(I)}(x_{k_2})\setminus \{x_i\}$, where $x_{k_1},x_{k_2}\in N_{G(I)}(x_i)\setminus \{x_j\}$. If $\{x_i,x_r\}\in E(G(I))$ for some $x_r\in N_{G(I)}(x_{k_1})$, then, $\{x_i,x_{k_1}, x_r\}$ forms a triangle. Next, if $\{x_j,x_r\}\in E(G(I))$ for some $x_r\in N_{G(I)}(x_{k_1})$, then the induced subgraph of $G(I)$ on the vertices $\{x_i,x_j,x_r,x_{k_1}\}$ has either a $4$-cycle or a triangle. Finally, if $\{x_r,x_{s}\}\in E(G(I))$ where $x_r\in N_{G(I)}(x_{k_1})\setminus \{x_i\}, x_{s}\in N_{G(I)}(x_{k_2})\setminus \{x_i\}$, and $x_{k_1},x_{k_2}\in N_{G(I)}(x_i)\setminus \{x_j\}$, then consider the induced subgraph $H$ of $G(I)$ on the vertices $\{x_i, x_{k_1}, x_{k_2}, x_r,x_{s}\}$. Observe that $H$ always contains a cycle of length $5$ or less, which is a contradiction to the fact that the girth of $G(I)$ is at least six. 
\end{proof}

We are now in a position to characterize support-$2$ Simis ideals whose underlying simple graphs are cycles. 
\begin{thm}\label{thm: Simis cycle}
    Let $I$ be a support-$2$ monomial ideal such that $G(I)=C_n$, where $n\geq 6$. Then, the following conditions are equivalent:
    \begin{enumerate}
        \item $I$ does not have any embedded primes.
        \item $I$ has a standard linear weighting.
    \end{enumerate}
    Moreover, $I^{(s)}=I^s$ for all $s\geq 1$ if and only if $n$ is even and $I$ has a standard linear weighting.
\end{thm}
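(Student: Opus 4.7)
The plan is to prove the equivalence $(1)\Leftrightarrow(2)$ first, and then deduce the ``moreover'' clause from it together with known facts about weighted monomial ideals and bipartite edge ideals. The direction $(2)\Rightarrow(1)$ is essentially formal: if $I=J_{w}$ with $J=I(C_{n})$ square-free and $w(\mathbf{e}_{i})=d_{i}\mathbf{e}_{i}$, then the irreducible primary decomposition of $I$ is $\bigcap_{C}\langle x_{i}^{d_{i}}:x_{i}\in C\rangle$ indexed by the minimal vertex covers $C$ of $C_{n}$, each component being primary to a distinct minimal prime, so $I$ has no embedded associated primes.

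For $(1)\Rightarrow(2)$, assume $I$ has no embedded primes. Since $C_{n}$ has girth $n\geq 6$ and no leaf vertices, Proposition~\ref{girth_6_embdd} applies vacuously to any edge with $\alpha_{i,j}\geq 2$, producing an element of $I^{(1)}\setminus I$ and contradicting the hypothesis; hence $\alpha_{i,j}=1$ for every edge of $C_{n}$. Next, suppose for contradiction that the two weights at some vertex disagree; after cyclic relabeling I may assume $w^{1}_{n,n-1}<w^{1}_{n,1}$. I would then exhibit the witness
\[
f=x_{1}^{w^{1}_{1,n}}\cdot x_{n-3}^{w^{1}_{n-3,n-2}}\cdot x_{n}^{w^{1}_{n,n-1}}.
\]
Because $n\geq 6$, the vertex $x_{n-3}$ is non-adjacent to both $x_{1}$ and $x_{n}$ in $C_{n}$, so the only edge of $G(I)$ contained in the support $\{x_{1},x_{n-3},x_{n}\}$ is $\{x_{1},x_{n}\}$. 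The only generator of $I$ that could divide $f$ is therefore $x_{1}^{w^{1}_{1,n}}x_{n}^{w^{1}_{n,1}}$, and this fails because $w^{1}_{n,1}>w^{1}_{n,n-1}$; so $f\notin I$.

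To verify $f\in I^{(1)}=\bigcap_{C}Q(P_{C})$, I would distinguish cases based on the presence of $x_{1}$ and $x_{n}$ in a minimal vertex cover $C$. If $x_{1}\in C$ and $x_{n}\notin C$, the $x_{1}$-exponent of $Q(P_{C})$ is bounded above by $w^{1}_{1,n}$; if $x_{1},x_{n}\in C$, the minimality of $C$ forces $x_{n-1}\notin C$, so the $x_{n}$-exponent of $Q(P_{C})$ equals $w^{1}_{n,n-1}$; if $x_{1}\notin C$, $x_{n}\in C$, and $x_{n-1}\notin C$, the same argument via $x_{n}$ works. The delicate case is $x_{1}\notin C$ with $\{x_{n-1},x_{n}\}\subseteq C$: minimality of $C$ then forces $x_{n-2}\notin C$, which in turn forces $x_{n-3}\in C$ to cover the edge $\{x_{n-3},x_{n-2}\}$; since $x_{n-2}\notin C$, the $x_{n-3}$-exponent of $Q(P_{C})$ is at most $w^{1}_{n-3,n-2}$, and $x_{n-3}^{w^{1}_{n-3,n-2}}$ in $f$ takes care of $Q(P_{C})$. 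Combining, $f\in I^{(1)}\setminus I$, giving the desired contradiction; hence all weights at each vertex agree, and setting $d_{i}$ equal to the common value yields the standard linear weighting with $I=J_{w}$ for $J=I(C_{n})$.

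For the ``moreover'' clause, the ``if'' direction is immediate: if $n$ is even then $C_{n}$ is bipartite, so $J=I(C_{n})$ is Simis by the classical Simis--Vasconcelos--Villarreal theorem \cite{SimisVasconcelosVillarreal1994}, and standard weighting preserves the Simis property by \cite[Corollary 5.5(a)]{mendez2024symbolic}, hence $I=J_{w}$ is Simis. Conversely, if $I$ is Simis then in particular $I^{(1)}=I$, so by the equivalence established above $I=J_{w}$ for $J=I(C_{n})$; since standard weighting commutes with the formation of both symbolic and ordinary powers and the underlying substitution is injective on monomials, $J$ itself must be Simis, which in turn forces $C_{n}$ to be bipartite, i.e., $n$ even. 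The principal obstacle is the witness construction in the $(1)\Rightarrow(2)$ step: the monomial $f$ must simultaneously avoid being divisible by any generator of $I$ and satisfy the exponent threshold of every primary component, which hinges on the carefully chosen middle variable $x_{n-3}$ with exponent $w^{1}_{n-3,n-2}$ to settle the wrap-around vertex cover case.
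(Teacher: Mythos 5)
Your proof is correct and follows essentially the same route as the paper: after invoking Proposition~\ref{girth_6_embdd} to force $\alpha_{i,j}=1$, your witness $f=x_{1}^{w^{1}_{1,n}}x_{n-3}^{w^{1}_{n-3,n-2}}x_{n}^{w^{1}_{n,n-1}}$ is, up to relabeling, exactly the paper's monomial $x_1^{w^1_{1,2}}x_2^{w^1_{2,3}}x_5^{w^1_{5,4}}$, with the same case analysis on which of the two endpoints of the unbalanced edge lie in the vertex cover. You additionally spell out $(2)\Rightarrow(1)$ directly and the ``moreover'' clause (which the paper's displayed proof leaves to its Theorem~\ref{thm: conj main thm} and the cited results of Mendez--Pinto--Villarreal), and those arguments are sound.
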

\begin{proof}
    The implication $(2)\Rightarrow (1)$ follows immediately from \cite[Corollary 5.5(c)]{mendez2024symbolic}, together with the fact that $I(G)$ has no embedded primes for any graph $G$. So we only need to show that $(1)\Rightarrow (2)$. Suppose that $I$ does not have any embedded prime, but there is a vertex $x_i\in V(G(I))$ such that $x_i$ does not have a standard linear weighting. By Proposition~\ref{girth_6_embdd}, it follows that $\alpha_{i,j}\leq 1$ for all $1\leq i,j\leq n$. Since $N_{G(I)}(x_i)=\{x_{i-1}, x_{i+1}\}$, and $x_i$ does not have a standard linear weighting, it follows that $w^1_{i,i-1}\neq w^1_{i,i+1}$. Without loss of generality, assume that $i=2$ and $w^1_{2,1}> w^1_{2,3}$. Now, consider the monomial $f=x_{1}^{w^1_{1,2}}x_2^{w^1_{2,3}}x_{5}^{w^1_{5,4}}$. We claim that $f\in I^{(1)}\setminus I$. To show $f\in I^{(1)}$, it is enough to show that $f\in Q(P)$ for all $P\in \MinAss(I)$. Let $P\in \MinAss(I)$ be such that $x_2\notin P$. Then $x_1\in P$ and $x_1^{w^1_{1,2}}\in Q(P)$. Next, if $x_2\in P$ but $x_3\notin P$, then $x_2^{w^1_{2,3}}\in Q(P)$. Finally, if both $x_2,x_3\in P$, then $x_4\notin P$ as $P$ is a minimal prime of $I$. Thus, $x_5\in P$ and $x_{5}^{w^1_{5,4}}\in Q(P)$. Therefore, $f\in Q(P)$ for any $P\in \MinAss(I)$. It now remains to verify that $f\notin I$. Since $n\geq 6$, it is evident that $f\in I$ if and only if $x_{1}^{w^1_{1,2}}x_2^{w^1_{2,1}}\mid f$. But this is an impossibility, as $w^1_{2,3}< w^1_{2,1}$.
\end{proof}

\begin{rmk}
    We remark that the above characterization requires the assumption that the underlying cycle has length $\geq 6$ and does not always hold for $n\leq 5$ (see Examples~\ref{example:3(1)}, \ref{example:3(2)}, \ref{example:4}, \ref{example:5}, and Section~\ref{sec:discussion} for a detailed discussion).
\end{rmk}

We now turn our attention to support-$2$ monomial ideals whose underlying graph is a whiskered graph. A simple graph $G$ is said to be a \textit{whiskered graph} if there is an induced subgraph $H$ of $G$ such that $G$ can be obtained by putting `whiskers' on each of the vertices of $H$.  More formally, $G$ is a whiskered graph if $V(G)=\{x_1,\ldots , x_m, x_{m+1},\ldots , x_{2m}\}$ and $H$ is an induced subgraph on $\{x_1,\ldots , x_m\}$ such that $E(G)=E(H)\cup \{\{x_1,x_{m+1}\}, \ldots , \{x_m,x_{2m}\}\}$. In this situation, we sometimes write $G=W_H$, and say that $G$ is a whiskered graph on the graph $H$. 

Let $I\subseteq  \K[x_1,\ldots ,x_m,x_{m+1},\ldots , x_{2m}]$ be a support-$2$ monomial ideal such that its underlying graph $G(I)$ is a whiskered graph. We will continue to employ the same notations introduced in Section~\ref{Intro_Pre} for support-$2$ monomial ideals, the only difference being that in this case, the indices $i,j$ will be in $\{1,2,\ldots , 2m\}$. A classical result due to Villarreal \cite[Proposition 2.2]{Villarreal1990} states that the edge ideals of whiskered graphs are always Cohen-Macaulay, and therefore, they have no embedded primes. Moreover, the associated primes of a whiskered graph can be expressed nicely in terms of the structure of the graph. This fact suggests that the support-$2$ monomial ideals whose radical ideal is an edge ideal of a whiskered graph may share some properties. In the next theorem, as a consequence of our study, we give a complete characterization of the Cohen-Macaulayness of support-$2$ monomial ideals whose underlying simple graph is a whiskered graph. This result substantially extends \cite[Theorem 3.1]{HaOriented2019}. For the proof of the following theorem, polarization comes across as an essential technique, so we recall this notion using the notation from \cite{peeva2010graded}. For $S = \mathbb{K}[x_1, \ldots, x_n]$, and a tuple 
$\mathbf{a} = (a_1, \dots, a_n) \in \mathbb{Z}_{\geq 0}^n$, we write $\textbf{x}^{\mathbf{a}}$ for the monomial 
$\textbf{x}^{\mathbf{a}} = \prod_{i= 1}^{n}x_i^{a_i}$ in $S$. 
\begin{defn}\cite[Construction 21.7]{peeva2010graded}
\begin{enumerate}
    \item Let $\textbf{x}^{\mathbf{a}} = x_1^{a_1} \cdots x_n^{a_n}$ be a monomial in $S$. The \emph{polarization} of $\textbf{x}^{\mathbf{a}}$ is defined to be
    \[
   ( \textbf{x}^{\mathbf{a}})^{\pol} = (x_1^{a_1})^{\pol} \cdots (x_n^{a_n})^{\pol},
    \]
    where the operator $(*)^{\pol}$ replaces $x_i^{a_i}$ by a product of distinct variables $\prod_{j=1}^{a_i} x_{i,j}$.
    
    \item Let $I = \left\langle\textbf{x}^{\mathbf{a}_1}, \dots, \textbf{x}^{\mathbf{a}_r}\right\rangle \subseteq S$ be a monomial ideal. The \emph{polarization} of $I$ is defined to be the ~ideal
    \[
    I^{\pol} = \left\langle(\textbf{x}^{\mathbf{a}_1})^{\pol}, \dots, (\textbf{x}^{\mathbf{a}_r})^{\pol}\right\rangle
    \]
    in a new polynomial ring $S^{\pol} = \mathbb{K}[x_{i,j} \mid 1 \leq i \leq n, 1 \leq j \leq p_i]$, where $p_i$ is the maximum power of $x_i$ appearing in $\textbf{x}^{\mathbf{a}_1}, \dots, \textbf{x}^{\mathbf{a}_r}$.
\end{enumerate}
\end{defn}

\begin{thm}\label{thm: CM whisker}
    Let $I\subseteq \K[x_1,\ldots ,x_m,x_{m+1},\ldots , x_{2m}]$ be a support-$2$ monomial ideal such that  $G(I)$ is the whiskered graph $W_H$ on the simple graph $H$, where $V(H)=\{x_1,\ldots , x_m\}$ and, $E(G(I))=E(H)\cup \{\{x_1,x_{m+1}\}, \ldots , \{x_m,x_{2m}\}\}.$
    Then, the following statements are equivalent:
    \begin{enumerate}\rm
        \item $S/I$ is Cohen-Macaulay,
        \item $I$ is unmixed,
        \item $I$ does not have any embedded primes,
        \item For all $1\leq i\leq m$, $\alpha_{i,m+i}=1$ and $w^1_{i,m+i} \geq \mu_{i,j}$ for any $1\leq j\leq m$.
    \end{enumerate}
\end{thm}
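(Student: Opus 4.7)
The plan is to prove the chain of implications $(1)\Rightarrow(2)\Rightarrow(3)\Rightarrow(4)\Rightarrow(1)$. The first two implications are standard: Cohen-Macaulayness forces unmixedness, and unmixedness rules out embedded primes because in a catenary ring an embedded prime must strictly contain, hence strictly dominate in height, some minimal prime. The substantive content therefore lies in the two implications $(3)\Rightarrow(4)$ and $(4)\Rightarrow(1)$.

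For $(3)\Rightarrow(4)$, the idea is to apply Proposition~\ref{prop:leaf embedded} systematically to each whisker edge. Fix $1\leq i\leq m$; the whisker edge $\{x_i,x_{m+i}\}$ is a leaf edge of $W_H$, so by Remark~\ref{remark:one gen}(a) it satisfies the minimal-vertex-cover hypothesis of Proposition~\ref{prop:leaf embedded}. Remark~\ref{remark:one gen}(b) then immediately yields $\alpha_{i,m+i}=1$, giving the first half of (4). For the inequality $w^1_{i,m+i}\geq \mu_{i,j}$, I would argue by contradiction: suppose $\mu_{i,j}>w^1_{i,m+i}$ for some $j$. Because $\alpha_{i,m+i}=1$ forces $\nu_{i,m+i}=w^1_{i,m+i}$, any minimal generator $u\in\G(I)$ supported on $\{x_i,x_j\}$ with $x_i$-exponent equal to $\mu_{i,j}$ satisfies $x_i^{\nu_{i,m+i}+1}\mid u$. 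Applying Proposition~\ref{prop:leaf embedded} to the leaf edge $\{x_i,x_{m+i}\}$ together with this $u$ produces an element of $I^{(1)}\setminus I$, contradicting the absence of embedded primes guaranteed by (3).

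The implication $(4)\Rightarrow(1)$ is the central difficulty, and I plan to attack it via polarization, exploiting the fact that $S/I$ is Cohen-Macaulay if and only if $S^{\pol}/I^{\pol}$ is Cohen-Macaulay. Under condition (4), the polarization of the unique whisker generator $x_i^{w^1_{i,m+i}}x_{m+i}^{w^1_{m+i,i}}$ uses all polarization variables associated to $x_i$, while the polarization of every other minimal generator containing $x_i$ uses only a subset of those variables. This compatibility should let me identify $I^{\pol}$ with the Stanley-Reisner ideal of a simplicial complex arising from a whiskered-graph type structure on the enlarged vertex set of polarization variables, so that the Cohen-Macaulay property follows from Villarreal's classical result on whiskered graphs together with its clutter-theoretic extensions. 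The hard part will be giving a clean combinatorial description of the minimal primes of $I^{\pol}$ under (4) and verifying that the resulting Stanley-Reisner complex is pure and, e.g., vertex decomposable or shellable; once this is in place, depolarization transfers Cohen-Macaulayness back to $S/I$ and closes the loop.
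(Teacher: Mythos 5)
Your implications $(1)\Rightarrow(2)\Rightarrow(3)$ and your argument for $(3)\Rightarrow(4)$ are correct and essentially identical to the paper's: the whisker edges are leaf edges, Remark~\ref{remark:one gen} gives $\alpha_{i,m+i}=1$, and your observation that $\alpha_{i,m+i}=1$ forces $\nu_{i,m+i}=w^1_{i,m+i}$, so that $\mu_{i,j}>w^1_{i,m+i}$ would produce a generator $u$ with $x_i^{\nu_{i,m+i}+1}\mid u$ and hence an embedded prime via Proposition~\ref{prop:leaf embedded}, is exactly the intended use of that proposition.

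The gap is in $(4)\Rightarrow(1)$. You correctly identify polarization as the tool, but your target --- realizing $I^{\pol}$ as the Stanley--Reisner ideal of a ``whiskered-graph type'' complex and invoking Villarreal plus clutter-theoretic extensions --- is not carried out, and it is not clear it can be carried out cleanly: the polarization of the whisker generator $x_i^{w^1_{i,m+i}}x_{m+i}^{w^1_{m+i,i}}$ is a product of $w^1_{i,m+i}+w^1_{m+i,i}$ distinct variables, so $I^{\pol}$ is the edge ideal of a clutter, not of a graph, and Villarreal's theorem does not apply directly; you would have to prove purity and shellability (or vertex decomposability) of the associated complex from scratch, which is precisely the hard content you defer. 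The paper sidesteps all of this with a simpler device: under condition (4), identify $x_i$ and $x_{m+i}$ with a single variable $u_i$, obtaining an ideal $J\subseteq\K[u_1,\ldots,u_m]$ that contains the pure power $u_i^{w^1_{i,m+i}+w^1_{m+i,i}}$ for every $i$; the inequality $w^1_{i,m+i}\geq\mu_{i,j}$ guarantees both that no generator of $I$ is absorbed under this identification and that $I^{\pol}$ coincides with $J^{\pol}$ up to a renaming of the polarization variables. Since $R/J$ is Artinian it is trivially Cohen--Macaulay, and two applications of the fact that polarization preserves Cohen--Macaulayness transfer this to $S/I$, with no combinatorial analysis of a Stanley--Reisner complex needed. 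As written, your proposal does not close the loop; to salvage your route you would need to actually exhibit and verify the shellable (or grafted) structure of $I^{\pol}$, whereas the Artinian reduction makes that unnecessary.
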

\begin{proof}
    The implication $(1)\Rightarrow (2)$ follows from \cite[Corollary 3.1.17]{MR3362802}, and $(2)\Rightarrow (3)$ follows from the definition of unmixedness.
    
    \noindent
    $(3)\Rightarrow (4)$ Note that $\{x_i,x_{m+i}\}\in E(G(I)), 1\leq i\leq m$ are leaves, and therefore satisfy the hypothesis of Proposition~\ref{prop:leaf embedded}. As $I$ does not have any embedded primes, the assertions follow from Proposition~\ref{prop:leaf embedded} and Remark~\ref{remark:one gen}.

    \noindent
    $(4)\Rightarrow (1)$ 
    We apply polarization and construct an Artinian ideal whose polarization is the same as that of $I$. For this purpose, let us consider the ideal $J \subseteq \mathbb{K}[u_1, \ldots, u_m]$ obtained from $I$ by replacing $x_i  = x_{m+i} = u_i , 1 \leq i \leq m$. Thus,
    \[
    J = \left\langle u_i^{w^1_{i,m+i}+w^1_{m+i,i}}, u_i^{w_{i,j}^1}u_j^{w_{j,i}^1}, \ldots,  u_i^{w_{i,j}^{\alpha_{i,j}}}u_j^{w_{j,i}^{\alpha_{i,j}}}: 1 \leq i,j \leq m, \{x_i,x_j\} \in E(H)\right\rangle.
    \]
     
    The assertion $w^1_{i,m+i} \geq \mu_{i,j}$ for any $1\leq j\leq m$ and the fact that $w^1_{m+i,i}\geq 1$ imply that for each $\{x_i,x_j\} \in E(H), 1\leq i,j\leq m$, $u_i^{w_{i,j}^t}u_j^{w_{j,i}^t} \in \mathcal{G}(J)$, where $1 \leq t \leq \alpha_{i,j}$. We now polarize the ideal $J$. In the polarization construction of $J$, the variables $u_1, \ldots, u_m$ are replaced by $u_{1,1}, \ldots, u_{m,1}$ respectively, and
    \begin{align*}
        \left(u_{i}^{w^1_{i,m+i}+w^1_{m+i,i}}\right)^\pol& = \left(\prod_{\ell = 1}^{w^1_{i,m+i}+w^1_{m+i,i}}u_{i,\ell}\right)   \text{ for }\; 1\leq i\leq m,\\
        \left(u_i^{w_{i,j}^t}u_j^{w_{j,i}^t} \right)^\pol &= \left(\prod_{\ell = 1}^{w_{i,j}^t}u_{i,\ell}\right) \left( \prod_{\ell = 1}^{w_{j,i}^t}u_{j,\ell}\right)  \text{ for } 1\leq i,j\leq m, 1\leq t\leq \alpha_{i,j}.\\
    \end{align*}
    
    On the other hand, in the polarization construction of $I$, the variables $x_1, \ldots, x_{2m}$ are replaced by $x_{1,1}, \ldots, x_{2m,1}$, and we have
    \begin{align*}
        \left(x_i^{w^1_{i,m+i}}x_{m+i}^{w^1_{m+i,i}}\right)^\pol &= \left(\prod_{\ell = 1}^{w^1_{i,m+i}} x_{i,\ell }\right) \left(\prod_{\ell = 1}^{w^1_{m+i,i}} x_{m+i,\ell }\right) \text{ for }\; 1\leq i\leq m,\\
        \left(x_i^{w_{i,j}^t}x_j^{w_{j,i}^t} \right)^\pol &= \left(\prod_{\ell = 1}^{w_{i,j}^t}x_{i,\ell}\right) \left( \prod_{\ell = 1}^{w_{j,i}^t}x_{j,\ell}\right)  \text{ for } 1\leq i,j\leq m, 1\leq t\leq \alpha_{i,j}.\\
    \end{align*}

    Let $S = \mathbb{K}[x_1, \ldots, x_{2m}] \text{ and }R = \mathbb{K}[u_1, \ldots, u_m].$ Consider the map
    $\phi : R^\pol \rightarrow S^ \pol$ given by 
    \[
    \phi(u_{i,j}) = \left\{
        \begin{array}{lll}
	       x_{i,j} & \text{ for \;\;}1 \le j \le w^1_{i,m+i},\\
	       x_{(m+i),(j-w^1_{i,m+i})} & \text{ for \;\;} w^1_{i,m+i}+1 \le j \le w^1_{i,m+i}+w^1_{m+i,i}.
        \end{array}\right.
    \] 
    This implies that, $I^\pol = \phi(J^\pol).$
    Observe that the ideal $J$ contains some power of each variable $u_i$ for $1\leq i\leq m$. Therefore, $R/J$ is an Artinian ring and hence $\mathrm{dim}(R/J)=0$. Since $0\leq \mathrm{depth}(R/J)\leq \mathrm{dim}(R/J)$, it follows that $\mathrm{depth}(R/J)=\mathrm{dim}(R/J)$ and hence $R/J$ is Cohen-Macaulay. Consequently, by \cite[Corollary 1.6.3]{herzog2011monomial} $R^\pol/J^\pol$ is Cohen-Macaulay, which in turn implies  $S^\pol/I^\pol$ is Cohen-Macaulay. Therefore, $S/I$ is Cohen-Macaulay again by \cite[Corollary 1.6.3]{herzog2011monomial}. This proves the theorem.
\end{proof}

Finally, we conclude this section with the study of the second symbolic power of support-$2$ monomial ideals, whose underlying simple graph is a whiskered graph. As the primary goal is to find necessary and sufficient conditions when all such ideals are Simis, while studying the second symbolic power of an ideal $I$, it is natural to have the assumption that $I^{(1)}=I$ holds. In the following theorem, under the assumption that the underlying simple graph is triangle-free, we give a complete characterization of when the second symbolic power of such an ideal $I$ coincides with the second ordinary power. 

\begin{thm}\label{thm:whisker second power}
     Let $I$ be a support-$2$ monomial ideal such that $G(I)$ is the whiskered graph $W_H$. Let $V(H)=\{x_1,\ldots , x_m\}$ and $E(G(I))=E(H)\cup \{\{x_1,x_{m+1}\}, \ldots , \{x_m,x_{2m}\}\}$. Further, assume that $I$ does not have any embedded prime, $G(I)$ is triangle-free, and $\alpha_{i,j}\leq 1$ for all $1\leq i,j\leq 2m$. Then, the following statements are equivalent:
     \begin{enumerate}\rm
         \item $ I^{(2)} = I^2$,
         \item For each edge $\{x_i,x_j\} \in E(H)$, one of the following conditions is satisfied:
         \begin{itemize}
             \item $w^1_{i,m+i}=w^1_{i,j}$ and $w^1_{j,m+j}=w^1_{j,i}$,
             \item $w^1_{i,m+i}\geq 2w^1_{i,j}$  and $w^1_{j,m+j} \geq 2w^1_{j,i}$.
         \end{itemize}
     \end{enumerate}
\end{thm}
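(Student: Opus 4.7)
The inclusion $I^2 \subseteq I^{(2)}$ is automatic, so I describe the two non-trivial implications.

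For $(1) \Rightarrow (2)$, I would argue the contrapositive. Suppose condition (2) fails at an edge $\{x_a, x_b\} \in E(H)$; abbreviate $p = w^1_{a,b}$, $q = w^1_{b,a}$, $P = w^1_{a,m+a}$, $Q = w^1_{b,m+b}$, $p' = w^1_{m+a,a}$, $q' = w^1_{m+b,b}$. Theorem~\ref{thm: CM whisker} applied to $I$ forces $P \geq p$ and $Q \geq q$, so the failure of (2) decomposes, up to the $a \leftrightarrow b$ symmetry, into three essential sub-cases, each carrying an explicit witness $f \in I^{(2)} \setminus I^2$: (I.a) if $p < P < 2p$ and $Q < 2q$, take $f = x_a^{2p} x_b^Q x_{m+a}^{p'}$; (I.b) if $p < P < 2p$ and $Q \geq 2q$, take $f = x_a^P x_b^Q x_{m+b}^{q'}$; (II) if $P \geq 2p$ and $q \leq Q < 2q$, take $f = x_a^P x_b^Q x_{m+a}^{p'}$. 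For each witness I would check $f \in Q(P)^2$ for every minimal prime $P$ of $I$ using the explicit formula for $Q(P)$ (its minimal generators are $x_i^{a_i}$ for $x_i \in P$, with $a_i = \min\{w^1_{i,j} : x_j \in N_{G(I)}(x_i),\, x_j \notin P\}$) together with Theorem~\ref{thm: CM whisker}(4), which gives $w^1_{i,j} \leq w^1_{i,m+i}$. To see $f \notin I^2$, observe that $\supp(f) \subseteq \{x_a, x_b, x_{m+a}, x_{m+b}\}$, so triangle-freeness of $G(I)$ and the hypothesis $\alpha_{i,j} \leq 1$ restrict the generators of $I$ whose support lies in $\supp(f)$ to the three monomials $g_1 = x_a^p x_b^q$, $g_2 = x_a^P x_{m+a}^{p'}$, $g_3 = x_b^Q x_{m+b}^{q'}$; the six pairwise products are each ruled out by the case inequalities.

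For $(2) \Rightarrow (1)$, I fix a monomial $f \in I^{(2)}$ and produce $g, g' \in \G(I)$ with $g g' \mid f$. From $f \in Q(P)^2$ at the minimal prime $P$ corresponding to the empty independent set (whose primary component is $\langle x_i^{w^1_{i,m+i}} : 1 \leq i \leq m \rangle$), I extract indices $i, j \in V(H)$, possibly equal, with $x_i^{w^1_{i,m+i}} x_j^{w^1_{j,m+j}} \mid f$. If $i \neq j$ and both whisker tails $x_{m+i}^{w^1_{m+i,i}}$ and $x_{m+j}^{w^1_{m+j,j}}$ divide $f$, then the two whisker generators already factor $f$. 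Otherwise, say $x_{m+i}^{w^1_{m+i,i}} \nmid f$; applying $f \in Q(P')^2$ at the minimal prime $P'$ corresponding to $\{x_i\}$, and noting that no dividing product of two generators of $Q(P')$ can use $x_{m+i}^{w^1_{m+i,i}}$, I locate a neighbor $x_k \in N_H(x_i)$ with $x_k^{w^1_{k,i}} \mid f$. Condition (2) at the edge $\{x_i, x_k\}$ is now the decisive input: in option (O1) with $w^1_{i,m+i} = w^1_{i,k}$, I replace the whisker of $x_i$ by the edge generator $x_i^{w^1_{i,k}} x_k^{w^1_{k,i}}$ without losing any $x_i$-mass carried by $x_i^{w^1_{i,m+i}}$; in option (O2) with $w^1_{i,m+i} \geq 2 w^1_{i,k}$, the divisor $x_i^{w^1_{i,m+i}}$ of $f$ already contains two disjoint copies of $x_i^{w^1_{i,k}}$. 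In either scenario I obtain a product of two generators of $I$ dividing $f$. The case $i = j$ is handled analogously: $x_i^{2 w^1_{i,m+i}} \mid f$ combined with the information from $Q(P')^2$ yields either $g_{\text{whisker}}^2 \mid f$ directly (if $f$ carries enough $x_{m+i}$-mass) or $g_{\text{whisker}} \cdot g_{\text{edge}} \mid f$ via a suitable neighbor of $x_i$.

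The main obstacle I expect is the bookkeeping in $(2) \Rightarrow (1)$, particularly when $f$'s support is spread across several whiskers so that condition (2) must be invoked at more than one edge of $H$ simultaneously; the triangle-free hypothesis is crucial here, since it prevents overlap between neighborhoods of the vertices involved, and the assumption $\alpha_{i,j} \leq 1$ guarantees that each edge contributes a unique generator, thereby removing ambiguity in the factorization extracted from $f \in Q(P')^2$.
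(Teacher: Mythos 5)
Your direction $(1)\Rightarrow(2)$ is essentially correct and runs parallel to the paper's argument: after Theorem~\ref{thm: CM whisker} forces $P\geq p$ and $Q\geq q$, your three sub-cases together with the $a\leftrightarrow b$ symmetry do exhaust the failure of (2), and each of your witnesses can be checked to lie in $I^{(2)}\setminus I^2$ (the paper uses the same case split with the witnesses $x_i^{w_{i,m+i}}x_j^{2w_{j,i}}x_{m+j}^{w_{m+j,j}}$ and $x_i^{w_{i,m+i}}x_j^{w_{j,m+j}}x_{m+j}^{w_{m+j,j}}$). One caveat: the pure powers $x_i^{a_i}$ you describe generate a subideal of $Q(P)$ but are not in general its full minimal generating set, since an edge generator with both endpoints in $P$ survives localization; this is harmless for your purposes because you only need those pure powers to \emph{belong} to $Q(P)$.

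The direction $(2)\Rightarrow(1)$ has a genuine gap, located exactly where you flag the ``bookkeeping.'' First, the primary component at $\mathfrak{p}_0=\langle x_1,\dots,x_m\rangle$ is \emph{not} $\langle x_i^{w^1_{i,m+i}}:1\leq i\leq m\rangle$: it also contains every edge generator $x_i^{w^1_{i,j}}x_j^{w^1_{j,i}}$ with $\{x_i,x_j\}\in E(H)$, and under the second option of (2) these are minimal generators of $Q(\mathfrak{p}_0)$. For example, for $I=\langle x_1^2x_2^2,\,x_1^5x_3,\,x_2^4x_4\rangle$ (which satisfies all hypotheses and condition (2)), the monomial $x_1^4x_2^4=(x_1^2x_2^2)^2$ lies in $Q(\mathfrak{p}_0)^2$ but is divisible by none of $x_1^{10}$, $x_1^5x_2^4$, $x_2^8$; so the extraction of indices $i,j$ with $x_i^{w^1_{i,m+i}}x_j^{w^1_{j,m+j}}\mid f$ is invalid. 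Second, from $f\in Q(\mathfrak{p}')^2$ and $x_{m+i}^{w^1_{m+i,i}}\nmid f$ you cannot conclude that some neighbour $x_k$ of $x_i$ satisfies $x_k^{w^1_{k,i}}\mid f$: the witnessing product of two generators of $Q(\mathfrak{p}')$ may avoid $x_i$ and its neighbourhood entirely (e.g.\ it could be $x_\ell^{2w^1_{\ell,m+\ell}}$ for some unrelated $\ell$), and the subsequent ``replacement'' steps are only sketched. The paper avoids this element-chase altogether: it assumes $I^{(2)}\neq I^2$, deduces that $I^2$ has an embedded prime $\mathfrak{p}'$, observes that $\mathfrak{p}'$ must contain both $x_i$ and $x_{m+i}$ for some $i$, writes $\mathfrak{p}'=I^2:v$ with $v\notin I^2$, and derives a contradiction from a divisibility analysis of the two factorizations $x_iv=fu_1u_2$ and $x_{m+i}v=gv_1v_2$. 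To salvage your direct approach you would at minimum need to handle the mixed and edge--edge witnesses in $Q(\mathfrak{p}_0)^2$ and organize a genuine exchange or induction argument; as written, the implication is not established.
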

\begin{proof}
    To simplify the notations, we will write $w_{i,j}$ instead of $w^1_{i,j}$ throughout this proof.

    \noindent
    $(1) \Rightarrow (2)$. Let us assume that there is an edge $\{x_i,x_j\} \in E(H)$ such that $(2)$ does not hold. As $I$ does not have any embedded prime, due to Proposition~\ref{prop:leaf embedded}, we also assume that for each $ 1 \le i \le m$ and $x_j \in N_{G(I)}(x_i)$, $w_{i,m+i}^1 \ge w_{i,j}^1$. By the symmetry of the conditions given in (2), if $(2)$ does not hold, it is straightforward to verify that one of the following is true:

    \begin{enumerate}
        \item[(a)] $w_{i,j} \le w_{i,m+i} < 2w_{i,j}$ and $ w_{j,i} < w_{j,m+j} \le 2w_{j,i}$.
        \item[(b)] $w_{i,j} \le w_{i,m+i} < 2w_{i,j}$ and $ w_{j,m+j} \ge 2w_{j,i}$.
    \end{enumerate}

    First, we assume that $(a)$ holds. We consider the monomial $f = x_i^{w_{i,m+i}}x_j^{2w_{j,i}}x_{m+j}^{w_{m+j,j}}$. We shall show that for any minimal prime ideal $P$ of $I$, $f\in Q(P)^2$. Indeed, if $P$ is a minimal prime of $I$ which contains both $x_i$ and $x_j$, then $x_i^{w_{i,m+i}}, x_j^{w_{j,m+j}} \in Q(P)$. If $P$ is a minimal prime of $I$ containing $x_i$ but does not contain $x_j$, then $x_i^{w_{ij}}, x_{m+j}^{w_{m+j,j}} \in Q(P)$. And if  $P$ is a minimal prime of $I$ containing $x_j$ but not containing $x_i$, then $x_j^{w_{ji}} \in Q(P)$. Note that, in each case $f\in Q(P)^2$, which in turn implies that $f\in I^{(2)}$. Moreover, it is easy to verify that under the conditions given in (a), $f\notin I^2$. Next, if (b) holds, by a similar approach as above, one can verify that $f = x_i^{w_{i,m+i}}x_j^{w_{j,m+j}}x_{m+j}^{w_{m+j,j}}\in I^{(2)}\setminus I^2$.

\medskip
    
    $(2) \Rightarrow (1)$. Assume by way of contradiction that $ I^{(2)} \ne I^2$. Then there is an embedded prime $P'$ of the ideal $I^2$. Note that any minimal prime ideal $P$ of $I$(or $I^2$) has the property that either $x_i\in P$ or $x_{m+i}\in P$. Thus, there is an index $i, 1 \le i \le n$ such that $x_i, x_{m+i} \in P'$. Let $I^2 : v = P'$ for some monomial $v \in S\setminus I^2$. This implies that $x_iv \in I^2$ and $x_{m+i}v \in I^2$. Let $u_1, u_2, v_1,v_2 \in \G(I)$ be such that 
    \begin{equation}\label{eqn:1}
        x_iv = fu_1u_2 \text{ and }
        x_{m+i}v = gv_1v_2,
    \end{equation} 
    where $f, g \in S$ are monomials. We now claim that if $u = u_i = v_j$ for some $1\leq i,j\leq 2$ then $u\nmid v$. 
    
    \noindent
    \textit{Proof of the claim:} If $u \mid v$, then we get $x_i\dfrac{v}{u} \in I$ and $x_{m+i}\dfrac{v}{u} \in I$, and thus $x_i, x_{m+i} \in \left(I : \dfrac{v}{u}\right)$. If $\left(I : \dfrac{v}{u}\right)\neq S$, the variables $x_i, x_{m+i}$ are contained in some associated prime of $I$. Since $I$ does not have any embedded prime, this is an impossibility. Therefore, we obtain $\left(I : \dfrac{v}{u}\right)= S$ and so $\dfrac{v}{u} \in I$. Thus, $v \in I^2$, which is a contradiction to the choice of the monomial $v$. This completes the proof of the ~claim. 

    Moreover, we can assume that $x_i\nmid f \text{ and }x_{m+i} \nmid g$, as otherwise, we have $v\in I^2$. Now, since $x_{m+i} \nmid g$, either $x_{m+i} \mid v_1$ or $x_{m+i} \mid v_2$. Without loss of generality, assume that $x_{m+i} \mid v_1$. Then, $v_1 = x_i^{w_{i,m+i}} x_{m+i}^{w_{m+i,i}}$. Similarly, $x_i \mid u_1$ or $x_i \mid u_2$. Let $x_i \mid u_1$. Then there are two possible choices for $u_1$.
     \begin{enumerate}
         \item[(a)] $u_1 = x_i^{w_{i,m+i}} x_{m+i}^{w_{m+i,i}}$,
         \item[(b)] $u_1 = x_i^{w_{i,j}} x_j^{w_{j,i}}$ for some $\{x_i,x_j \} \in E(H)$.
     \end{enumerate} 
    If $u_1 = x_i^{w_{i,m+i}} x_{m+i}^{w_{m+i,i}}$, then $u_1 \mid x_i v$ implies that $x_{m+i}^{w_{m+i,i}} \mid v$. Moreover, $v_1 \mid x_{m+i} v$ gives $x_i^{w_{i,m+i}} \mid v$. Hence $x_i^{w_{i,m+i}} x_{m+i}^{w_{m+i,i}}\mid v$, where $u_1=v_1=x_i^{w_{i,m+i}} x_{m+i}^{w_{m+i,i}}$, which is a contradiction to the claim above. Therefore, we can conclude that $u_1 = x_i^{w_{i,j}} x_j^{w_{j,i}}$ for some $\{x_i,x_j \} \in E(H)$. Now we have, $v = fx_i^{w_{i,j} - 1} x_j^{w_{j,i}}u_2$. Multiplying both sides by $x_{m+i}$ and by using Equation~\ref{eqn:1}, we obtain
     \begin{align*}
         gx_i^{w_{i,m+i}}x_{m+i}^{w_{m+i,i}}v_2 =x_{m+i}fx_i^{w_{i,j} - 1} x_j^{w_{j,i}}u_2.
     \end{align*}
    From the assumptions given in $(2)$, it follows that $w_{i,m+i}\geq w_{i,j}$, and therefore, dividing both sides of the above equation by $x_i^{w_{i,j}-1}$, we obtain 
    \begin{align*}
        gx_i^{w_{i,m+i} - w_{i,j} + 1}x_{m+i}^{w_{m+i,i}-1}v_2 = f x_j^{w_{j,i}}u_2.
    \end{align*}
    Note that $w_{i,m+i} - w_{i,j} + 1 \ge 1$, and from the fact that $x_i\nmid f$, it follows that $x_i\mid u_2$. Therefore, $u_2 = x_i^{w_{i,k}}x_k^{w_{k,i}}$ for some $\{x_i,x_k\} \in E(G(I))$. Again, by the same argument as above, we have that $k\neq m+i$, and therefore, $\{x_i,x_k\}\in E(H)$. We now have
    \[
        gx_i^{w_{i,m+i} - w_{i,j} + 1}x_{m+i}^{w_{m+i,i}-1}v_2 = f x_j^{w_{j,i}}(x_i^{w_{i,k}}x_k^{w_{k,i}}).
    \]
    
    \noindent
    Dividing both sides of the above equation by $x_i^{w_{i,k}}$, we get
    \begin{equation} \label{consequence}
          \dfrac{gx_i^{w_{i,m+i} - w_{i,j} + 1}x_{m+i}^{w_{m+i,i}-1}v_2}{x_i^{w_{i,k}}} = f x_j^{w_{j,i}}x_k^{w_{k,i}}.
    \end{equation}
    We now consider the following two cases.

    \noindent
    \textsc{Case I:} Assume that $x_i \nmid v_2$. Note that $v_2\nmid f$, as otherwise we have $f\in I$, and hence $v\in I^2$, which is a contradiction to our assumption. This implies that $x_j \mid v_2$, or $x_k \mid v_2$. Without loss of generality, we assume that $x_j\mid v_2$. Let $v_2 = x_j^{w_{j,\ell}}x_\ell^{w_{\ell,j}}$ for some $x_{\ell}\in V(G(I)), \ell \ne i$. First, let us consider the case when $k \ne j$. Replacing $v_2 = x_j^{w_{j,\ell}}x_\ell^{w_{\ell,j}}$ and dividing both sides by $x_j^{w_{j,\ell}}$ in Equation \ref{consequence}, we get
    \begin{equation*}\label{eqn:3.4}
          \dfrac{gx_i^{w_{i,m+i} - w_{i,j} + 1}}{x_i^{w_{i,k}}}x_{m+i}^{w_{m+i,i}-1}x_\ell^{w_{\ell,j}} =  \dfrac{fx_j^{w_{j,i}}}{x_j^{w_{j,\ell}}}x_k^{w_{k,i}}.
    \end{equation*}
    Note that $k\neq \ell$, as otherwise, $\{x_i, x_j, x_k\}$ forms a triangle in $G(I)$. Then, we have that $x_k^{w_{k,i}} \mid g$, and consequently, $x_k^{w_{k,i}} x_i^{w_{i,m+i}}x_{m+i}^{w_{m+i,i} - 1}=x_k^{w_{k,i}}\dfrac{v_1}{x_{m+i}}\mid \dfrac{gv_1}{x_{m+i}}$. From the assumption that $w_{i,m+i} \ge w_{i,k}$, we obtain $ x_i^{w_{i,k}}x_k^{w_{k,i}} \mid \dfrac{gv_1}{x_{m+i}}$, and thus we have $v=\left(\dfrac{gv_1}{x_{m+i}}\right)\cdot v_2 \in I^2$, as $x_i^{w_{i,k}}x_k^{w_{k,i}}\in I$. But this is a contradiction. Next, let us suppose that $k = j$. Then from Equation~\ref{consequence}, replacing $v_2 = x_j^{w_{j,\ell}}x_\ell^{w_{\ell,j}}$, $x_k=x_j$, and dividing both sides by $x_j^{w_{j,\ell}}$, we get
    \begin{equation*}
        \dfrac{gx_i^{w_{i,m+i} - w_{i,j} + 1}}{x_i^{w_{i,j}}} x_{m+i}^{w_{m+i,i}-1}x_\ell^{w_{\ell,j}} = \dfrac{f x_j^{2w_{j,i}}}{x_j^{w_{j,\ell}}}
    \end{equation*}
    If $w_{i,m+i} \ge 2w_{i,j}$, then $x_i \mid f$, which is not possible. Therefore, $w_{i,m+i} = w_{i,j}$ and $w_{j,m+j} = w_{j,i}$. As, $\ell \neq j$, we have $x_\ell^{w_{\ell,j}} \mid f$. This implies that $x_\ell^{w_{\ell,j}}x_i^{w_{i,j}-1}x_j^{w_{j,i}}= x_\ell^{w_{\ell,j}}\dfrac{u_1}{x_i} \mid \dfrac{fu_1}{x_i}$. As $w_{j,m+j} = w_{j,i}$ and since $w_{j,m+j} = w_{j,\ell}$, or $w_{j,m+j} \ge 2w_{j,\ell}$, in either case, we get $x_\ell^{w_{\ell,j}}x_j^{w_{j,\ell}} \mid \dfrac{fu_1}{x_i}$ and thus, $v=\left(\dfrac{fu_1}{x_{i}}\right)\cdot u_2 \in I^2$, which is again a contradiction.

    \noindent
    \textsc{Case II:} Assume that $x_i \mid v_2$. If $v_2 = x_i^{w_{i,j}}x_j^{w_{j,i}}=u_1$ or  $v_2 = x_i^{w_{i,k}}x_k^{w_{k,i}}=u_2$, then the fact $v_2 \mid v$ contradicts the claim above. Therefore, it is sufficient to consider the case when $ v_2 = x_i^{w_{i,\ell}}x_\ell^{w_{\ell,i}}$, where $\ell \notin\{j,k\}$. Substituting $v_2 = x_i^{w_{i,\ell}}x_\ell^{w_{\ell,i}}$ in Equation \ref{consequence}, we have that 
    \begin{equation*} \label{eq:3.6}
          \dfrac{gx_i^{w_{i,m+i} - w_{i,j} + w_{i,\ell}+ 1}x_{m+i}^{w_{m+i,i}-1}x_{\ell}^{w_{\ell,i}}}{x_i^{w_{i,k}}} = f x_j^{w_{j,i}}x_k^{w_{k,i}}.
    \end{equation*}
    Since, $j \ne \ell, m+i, \text{ and } k \ne \ell, m+i$, we conclude that, $x_j^{w_{j,i}}x_k^{w_{k,i}}\mid g$, and consequently, we have that $x_j^{w_{j,i}}x_k^{w_{k,i}}x_i^{w_{i,m+i}}x_{m+i}^{w_{m+i,i}-1}= x_j^{w_{j,i}}x_k^{w_{k,i}}\dfrac{v_1}{x_{m+i}} \mid \dfrac{gv_1}{x_{m+i}}$. As $w_{i,m+i}\geq w_{i,j}$, it follows that $\dfrac{gv_i}{x_{m+i}} \in I$ and therefore, $v \in I^2$, which is again a contradiction. Hence $I^{(2)} = I^2.$
\end{proof}

\section{Some Examples and Concluding Remarks}\label{sec:discussion}

In this section, we discuss some computational evidence that came out while studying the symbolic powers of support-$2$ monomial ideals. Let $I\subseteq S$ be any monomial ideal. There are two seemingly unrelated conditions that have been imposed on the ideal $I$ in Conjecture~\ref{conj}: (1) the irreducible decomposition of $I$ needs to be minimal, and (2) $I$ does not have any embedded primes, or equivalently, $I^{(1)}=I$. The condition (2) is of course, a necessary condition for an ideal $I$ to be Simis, but it is too far away from being sufficient. We also note that there are Simis monomial ideals that do not satisfy the condition (1). 

\begin{exmp}\label{example:1}
    By looking at its primary decomposition, it can be verified that the ideal $I = \left\langle x_1x_2^2,x_2x_3,x_3^2x_4\right\rangle $ is Simis and its minimal primary decomposition is not irreducible. Indeed, the minimal primary decomposition of the ideal is given by $I=\left\langle x_1,x_3 \right\rangle\cap \left\langle x_2, x_4\right\rangle\cap \left\langle x_2^2, x_2x_3, x_3^2\right\rangle$. Set $Q_1=\left\langle x_1,x_3 \right\rangle, Q_2=\left\langle x_2, x_4\right\rangle,$ and $Q_3=\left\langle x_2^2, x_2x_3, x_3^2\right\rangle$. Note that the ideal $Q_3$ is not irreducible. Now we shall show that $I^{(s)} = I^s$ for all $s\geq 1$. We proceed by induction on $s$. The statement for $s=1$ is true. Note that $I^{(s)}=\bigcap_{i=1}^3 Q_i^s$ and take $f \in \G\left(I^{(s)}\right)$. If $x_2x_3\nmid f$, then either $x_2\nmid f$, or $x_3\nmid f$. If $x_2\nmid f$ then since $f\in Q_2^s\cap Q_3^s$, we have $x_4^s\mid f$ and $x_3^{2s}\mid f$. This implies that, $(x_3^2x_4)^s\mid f$ and hence, $f\in I^s$. By the symmetry, if $x_3\nmid f$ then $(x_1x_2^2)^s\mid f$. Now, let us assume that $x_2x_3\mid f$ and set $f'=\frac{f}{x_2x_3}$. Since $f \in Q_3^s$, there exists non-negative integers $c_i, 1\leq i\leq 3$ such that $g=x_2^{2c_1}(x_2x_3)^{c_2}x_3^{2c_3} \mid f$ with $c_1+c_2+c_3= s$. Without loss of generality, assume that $c_1\leq c_3$. Then we can rewrite $g$ as $g=(x_2x_3)^{c_2+2c_1}x_3^{2(c_3-c_1)}$. Thus, if $x_2x_3\mid f$, then $f'=\frac{f}{x_2x_3}\in Q_i^{s-1}$ for all $1\leq i\leq 3$. Therefore, by induction hypothesis, $f'\in I^{s-1}$, and hence, we have $f\in I^s$. Thus, $I^{(s)} = I^s$ for all $s \ge 1$.
\end{exmp}

We further note that, for support-$2$ monomial ideals, the assumption (1) does not imply that $\alpha_{i,j}\leq 1$ for all $\{x_i,x_j\}\in G(I)$. So, the assertions in Lemma~\ref{lem:conj lem 2} are not self-contradictory.

\begin{exmp}\label{example:2}
    The irreducible decomposition of the ideal $I = \left\langle x_1x_2^4,x_2^4x_3,x_2x_3^4,x_3^4x_4\right\rangle$ is minimal. Indeed, the minimal primary decomposition of the ideal is given by $I = \left\langle x_1, x_3\right\rangle \cap \left\langle x_2, x_4\right\rangle \cap \left\langle x_2^4, x_3^4\right\rangle$. Moreover, we have  $x_2^4x_3^4 \in I^{(2)} \setminus I^2$.
\end{exmp}

Next, we consider several examples where the underlying graph of the support-$2$ monomial ideal $I$ is a cycle. Recall that the Simis property has been characterized in Theorem~\ref{thm: Simis cycle} when the cycle has length $\geq 6$. The following examples suggest that the same characterization does not hold for ~$C_5$.
\begin{exmp}\label{example:3(1)} The ideal $I = \left\langle x_1^2x_2,x_2x_3,x_3x_4,x_4x_5,x_5x_1\right\rangle$ can be regarded as an edge ideal of a weighted oriented graph. It can be verified using the notion of strong vertex covers introduced in \cite{PRT2019} that $I$ does not have any embedded prime.
\end{exmp}
        
\begin{exmp}\label{example:3(2)}       
By examining the minimal primary decomposition, one can verify that the ideal $I = \left\langle x_1x_2,x_2x_3^2,x_3x_4,x_4^2x_5,x_5x_1\right\rangle$ is a Simis ideal by showing that every minimal monomial generator of the symbolic power $I^{(s)}$ lies in the ordinary power $I^s$ for every $s \ge 1$. Note that the minimal primary decomposition of $I$ is given by
    \[
    I= \left\langle x_1,x_2,x_4\right\rangle \cap \left\langle x_2,x_3,x_5 \right\rangle  \cap \left\langle x_2,x_4,x_5 \right\rangle \cap \left\langle x_1,x_3,x_5 \right\rangle \cap \left\langle x_1,x_3^2,x_3x_4,x_4^2\right\rangle.
    \]
    We proceed by induction to show that $I^s=I^{(s)}$ for all $s\geq 1$. Since $I$ does not have any embedded primes, the above statement is true for $s=1$. Now assume that $s\geq 2$, and let $f\in \G\left(I^{(s)}\right)$. Then there exists non-negative integers $p_i, q_i, m_i, n_i, \ell_j, 1\leq i\leq 3, 1\leq j\leq 4$, such that the monomials $x_1^{p_1}x_2^{p_2}x_4^{p_3},x_2^{q_1}x_3^{q_2}x_5^{q_3}, x_2^{m_1}x_4^{m_2}x_5^{m_3}, x_1^{n_1}x_3^{n_2}x_5^{n_3}, x_1^{\ell_1}x_3^{2\ell_2}(x_3x_4)^{\ell_3}x_4^{2\ell_4}$ divide $f$, where $\sum_{i}p_i=\sum_{i}q_i=\sum_{i}m_i=\sum_{i}n_i=\sum_{j}\ell_j=s$. First, let us assume that $x_3x_4 \mid f$, and consider $f' = \frac{f}{x_3x_4}$. Note that if $\ell_2\leq \ell_4$, we can rewrite the monomial $x_1^{\ell_1}x_3^{2\ell_2}(x_3x_4)^{\ell_3}x_4^{2\ell_4}= x_1^{\ell_1}(x_3x_4)^{\ell_3+2\ell_2}x_4^{2(\ell_4-\ell_2)}$. This implies that if $x_3x_4 \mid f$, we may assume that $\ell_3\neq 0$. Thus, from the above primary decomposition, it follows that $f' = \frac{f}{x_3x_4}\in I^{(s-1)}$, and by the induction hypothesis, $f'\in I^{s-1}$. This implies that $f\in I^s$, and we are through. Next, we assume that $x_3x_4\nmid f$. In this case, we will directly show that $f\in I^s$. If $x_3, x_4 \nmid f$, then $\ell_2 = \ell_3 =\ell_4=0 $ and $m_2 = 0$. This implies that $\ell_1 = s=m_1+m_3$, and hence $x_1^{s}x_2^{m_1}x_5^{m_3}=(x_1x_2)^{m_1}(x_1x_5)^{m_3}\mid f$. Thus $f \in I^s$. Next, let us assume that $x_3 \mid f$ but $x_4 \nmid f$. Then $p_3 = \ell_3 = \ell_4 = m_2 = 0$, and hence,
    \[
    x_1^{\max\{p_1,\ell_1,n_1\}}x_2^{\max\{p_2,q_1,m_1\}}x_3^{\max\{q_2,2\ell_2,n_2\}}x_5^{\max\{q_3,m_3,n_3\}}\mid f.
    \]
    If $\ell_1 \ge m_3$, then $\ell_2 \le m_1$ and $\ell_1-m_3=m_1-\ell_2\geq 0$. Note that $x_1^{\ell_1}x_2^{m_1}x_3^{2\ell_2}x_5^{m_3}\mid f$, and we can rewrite $x_1^{\ell_1}x_2^{m_1}x_3^{2\ell_2}x_5^{m_3}=(x_1x_5)^{m_3}(x_2x_3^2)^{\ell_2}x_1^{\ell_1-m_3}x_2^{m_1-\ell_2}=(x_1x_5)^{m_3}(x_2x_3^2)^{\ell_2}(x_1x_2)^{\ell_1-m_3}$. Therefore, in this case $f\in I^s$. Next, assume that $\ell_1<m_3$. Then, $\ell_2>m_1$. Further, if $\ell_1\geq p_1$, then $p_2\geq \ell_2>m_1$. Again, note that $x_1^{\ell_1}x_2^{p_2}x_3^{2\ell_2}x_5^{m_3}\mid f$, and we can rewrite this as $x_1^{\ell_1}x_2^{p_2}x_3^{2\ell_2}x_5^{m_3}=(x_1x_5)^{\ell_1}(x_2x_3^2)^{\ell_2}\cdot x_2^{p_2 - \ell_2}x_5^{m_3-\ell_1}$. This shows that $f\in I^s$. Finally, assume that $\ell_1<m_3$ and $\ell_1 < p_1$. Now, let $m_3\leq p_1$. Note that $x_1^{p_1}x_2^{m_1}x_3^{2\ell_2}x_5^{m_3}\mid f$. We can rewrite this as $x_1^{p_1}x_2^{m_1}x_3^{2\ell_2}x_5^{m_3}= (x_1x_5)^{m_3}(x_2x_3^2)^{m_1}\cdot x_1^{p_1-m_3}x_3^{2(\ell_2-m_1)}$. This implies that $f\in I^s$. Lastly, if $m_3>p_1$, then since $\ell_1<p_1$, we have $\ell_2>p_2$. Note that $x_1^{p_1}x_2^{p_2}x_3^{2\ell_2}x_5^{m_3}\mid f$, and we can rewrite this as $x_1^{p_1}x_2^{p_2}x_3^{2\ell_2}x_5^{m_3}=(x_1x_5)^{p_1}(x_2x_3^2)^{p_2}\cdot x_3^{2(\ell_2-p_2)}x_5^{m_3-p_1}$. Therefore, if $x_3\mid f$ but $x_4\nmid f$, then again $f\in I^s$. By the symmetry, if $x_4\mid f$ but $x_3\nmid f$, then one can similarly show that $f\in I^s$, and this completes the proof of our claim.
\end{exmp}

When the underlying simple graph of $I$ is either $C_4$ or $C_5$, then the following statement is true: if $\alpha_{i,j} \ge 2$ for some $i,j$, then $I$ has embedded primes. Let $V(C_n) = \{x_1,\ldots,x_n\}$ and without loss of generality, assume that $i =1, j =2$. If $G(I)=C_4$, consider the monomial $f = x_1^{\nu_{1,2}} x_2^{\nu_{2,1}}$. We will show that $f \in I^{(1)} \setminus I$. The minimal prime ideals of $I$ are given by $P_1=\left\langle x_1,x_3\right\rangle$ and $P_2=\left\langle x_2,x_4\right\rangle$. Note that $x_1^{\nu_{1,2}}\in Q(P_1)$ when $x_2\notin P_1$, and $x_2^{\nu_{2,1}}\in Q(P_2)$ when $x_1\notin P_2$. Thus, $f\in Q(P_1)\cap Q(P_2)=I^{(1)}$, but $f\notin I$ since $\alpha_{1,2}\geq 2$. 

If $G(I)=C_5$, we consider the monomial $g = x_1^{\nu_{1,2}} x_2^{\nu_{2,1}} x_4^{\nu_{4,3}}$, and we claim that $g\in I^{(1)} \setminus I$. Indeed, if $P$ is a minimal vertex cover such that $x_1\notin P$ or $x_2\notin P$, then by similar arguments as before, we see that $g\in Q(P)$. Finally, if $x_1, x_2 \in P$, then $x_3\notin P$ and hence $x_4\in P$. Then also $x_4^{\nu_{4,3}}\in Q(P)$, and hence $g\in Q(P)$. This proves that $g \in I^{(1)}$, and it is clear that $g \notin I$.

Thus, we conclude that for a Simis ideal $I$, where $G(I)=C_4 \text{ or } C_5$, $I$ must be a generalized edge ideal, that is, $\alpha_{i,j}\leq 1$ for all $i,j$. Moreover, if the underlying graph of $I$ is $C_4$, we can further say the following:  if some vertex of $C_4$ does not have a standard linear weighting, then $I$ has embedded primes. Let $V(C_4) = \{x_1,\ldots,x_4\}$ and without loss of generality, let us assume that $w_{1,2}^1 < w_{1,4}^1$. The minimal prime ideals associated with $I_w(C_4)$ are precisely $P_1=\left\langle x_1, x_3\right\rangle$ and $P_2=\left\langle x_2, x_4\right\rangle$. Consider $f = x_1^{w_{1,2}^1}x_4^{w_{4,1}^1}.$ It is clear that $f \in Q(P_1)\cap Q(P_2)=I^{(1)}$. Moreover, $f \notin I$ since $w_{1,2}^1 < w_{1,4}^1$. Hence, $I$ must have an embedded prime. Therefore, we have the following remark.

\begin{rmk}\label{remark: 4 cycle}
    The assertions in Theorem~\ref{thm: Simis cycle} hold for the case when $G(I)=C_4$.
\end{rmk}

The situation is completely different when it comes to the case when $G(I)=C_3$. The following example suggests that there are ideals for which $\alpha_{i,j}\geq 2$, but $I$ does not have any embedded primes.
\begin{exmp}\label{example:4}
    The ideal $I = \left\langle x_1x_2^3,x_2^2x_3,x_2x_3^2,x_3^3x_1 \right\rangle$ has no embedded associated prime.
\end{exmp}

When $I$ is a generalized edge ideal and $G(I)=C_3$, it has been characterized in \cite[Theorem 3.10]{das2024equality} when it is Simis. So, it remains to consider the case when $\alpha_{i,j}\geq 2$ for some $1\leq i, j\leq 3, i\neq j$. The following example suggests that there are Simis ideals of this kind.

\begin{exmp}\label{example:5} The ideals $I = \left\langle x_1x_2^3,x_2^2x_3,x_2x_3^2,x_3^3x_1\right\rangle$ and $J = \left\langle x_1x_2^4,x_2^3x_3,x_2^2x_3^2,x_2x_3^3,x_3^4x_1\right\rangle$ are support-$2$ monomial ideals whose base graph is $C_3$. By direct computations, it can be verified that both the ideals $I$ and $J$ are Simis. Let us prove the assertion for the ideal $J$, and the arguments to show that the ideal $I$ is Simis will be similar. The minimal primary decomposition of $J$ is given by 
    \[
    J=\left\langle x_1,x_2 \right\rangle \cap \left\langle x_1,x_3 \right\rangle \cap \left\langle x_2^4, x_2^3x_3, x_2^2x_3^2, x_2x_3^3, x_3^4 \right\rangle.
    \]
    Set $Q_1=\left\langle x_1,x_2 \right\rangle, Q_2= \left\langle x_1,x_3 \right\rangle, \text{ and } Q_3= \left\langle x_2^4, x_2^3x_3, x_2^2x_3^2, x_2x_3^3, x_3^4 \right\rangle$. We shall show that $J^{(s)}=J^s$ for all $s\geq 1$. We proceed by induction on $s$. Since $J$ does not have any embedded primes, the assertion is true for $s=1$. Assume that, $s\geq 2 $ and take $f\in\G(J^{(s)})$. Then there exists non-negative integers $c_i, 1\leq i\leq 5$ such that $(x_2^4)^{c_1}(x_2^3x_3)^{c_2}(x_2^2x_3^2)^{c_3}(x_2x_3^3)^{c_4}(x_3^4)^{c_5}\mid f$, where $\sum_{i=1}^5c_i=s$. First, we will make a few observations. (i) If $c_1=c_5=0$, then $f\in J^s$. (ii) If $c_1\leq c_5$, we can assume that $c_1=0$, as $(x_2^4)^{c_1}(x_3^4)^{c_5}=(x_2^2x_3^2)^{2c_1}(x_3^4)^{c_5-c_1}$. (iii) By the symmetry, if $c_5\leq c_1$, we can assume that $c_5=0$. (iv) If $c_3\leq c_5$, we can assume that $c_3=0$, as $(x_2^2x_3^2)^{c_3}(x_3^4)^{c_5}=(x_2x_3^3)^{2c_3}(x_3^4)^{c_5-c_3}$. (v) If $c_5\leq c_3$, then we can assume that $c_5=0$ as $(x_2^2x_3^2)^{c_3}(x_3^4)^{c_5}=(x_2x_3^3)^{2c_5}(x_2^2x_3^2)^{c_3-c_5}$. (vi) If $c_2\leq c_5$, we can assume that $c_2=0$, as $(x_2^3x_3)^{c_2}(x_3^4)^{c_5} =(x_2^2x_3^2)^{c_2}(x_2x_3^3)^{c_2}(x_3^4)^{c_5-c_2}$. (vii) If $c_5\leq c_2$, we can assume that $c_5=0$, as $(x_2^3x_3)^{c_2}(x_3^4)^{c_5} =(x_2^3x_3)^{c_2-c_5}(x_2^2x_3^2)^{c_5}(x_2x_3^3)^{c_5}$. We now consider two cases.    
    First, assume that $x_1\nmid f$. In this case, we will directly show that $f\in J^s$. Since $x_1\nmid f$, we have $x_2^s\mid f$. Without loss of generality, assume that $c_1\leq c_5$ and $c_5\neq 0$. Then by (ii), we may assume that $c_1=0$. We then have $x_3^{c_2 + 2c_3 + 3c_4 + 4c_5}\mid f$. We claim that $c_2 + 2c_3 + 3c_4 + 4c_5\geq 3s$. On the contrary, if $c_2 + 2c_3 + 3c_4 + 4c_5<3s$, then $c_2 + 2c_3 + 3c_4 + 4c_5<3(c_2+c_3+c_4+c_5)$, and hence $c_5< 2c_2+c_3$. If $c_5\leq c_3$, then by (v) we may assume that $c_5=0$. Then $c_1=0=c_5$, and hence, we are through by (i). Now, assume that $c_3<c_5$. Then, by (iv), we may assume that $c_3=0$, which implies that $c_5<2c_2$. Now, if $c_5 \leq c_2$, then by (vii), we may further assume that $c_5=0$, and again we are through by (i). On the other hand, if $c_2\leq c_5$, then by (vi), we can take $c_2=0$, which is a contradiction to the fact that $c_5$ is a non-negative integer and $c_5<2c_2$.     
    Next, we consider the case when $x_1\mid f$. Without loss of generality, assume that $c_1\leq c_5$. Then, by (ii) we may further assume that $c_1=0$. Again, if $c_5=0$ then we are through by (i). Suppose that $c_5\neq 0$. We claim that $x_3^{s+3}\mid f$. Note that, $x_3^{c_2+2c_3+3c_4+4c_5}\mid f$ and $c_2+c_3+c_4+c_5=s$. Now $c_2+2c_3+3c_4+4c_5=s+c_3+2c_4+3c_5\geq s+3$ as $c_5\geq 1$. Thus $x_3^{s+3}\mid f$, proving the claim. Now, we set $f'=\frac{f}{x_1x_3^4}$. Note that $f'\in Q_1^{s-1}$, $f'\in Q_2^{s-1}$ as $x_3^{s-1}\mid f'$, and $f'\in Q_3^{s-1}$ as $(x_2^3x_3)^{c_2}(x_2^2x_3^2)^{c_3}(x_2x_3^3)^{c_4}(x_3^4)^{c_5-1}\mid f'$. Thus, $f'\in J^{(s-1)}$, and by the induction hypothesis, $f'\in J^{s-1}$. This implies that $f\in J^s$, as desired.

\end{exmp}

The next result characterizes all support-$2$ monomial ideals for which the graded maximal ideal is an associated prime, in the case when the underlying graph $G(I)$ is a $3$-cycle.
\begin{prop}
    Let $I$ be a support-$2$ monomial ideal such that $G(I)=C_3$ and $\mathfrak{m} = \left\langle x_i,x_j,x_k\right\rangle$ be the unique homogeneous maximal ideal of $\mathbb{K}[x_i,x_j,x_k]$. 
    Then, the following two hold:
\end{prop}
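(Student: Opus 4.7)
The plan is to reduce the question of whether $\mathfrak{m} \in \Ass(S/I)$ to a combinatorial search for a distinguished monomial, and then to translate the existence (or non-existence) of such a monomial into the two conditions asserted in the proposition.

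I would begin by observing that the minimal vertex covers of $C_3$ on $\{x_i,x_j,x_k\}$ are the three $2$-element covers, so the minimal primes of $I$ are exactly $P_{ij} = \langle x_i, x_j \rangle$, $P_{jk} = \langle x_j, x_k \rangle$, and $P_{ik} = \langle x_i, x_k \rangle$, all of height $2$ in $\K[x_i,x_j,x_k]$. Since the only prime strictly containing any of these is $\mathfrak{m}$ itself, having $\mathfrak{m}\in\Ass(S/I)$ is equivalent to $I^{(1)} \supsetneq I$. Each primary component can be computed by inverting the "missing" variable; for example,
\[
Q(P_{ij}) \;=\; \langle T_{i,j} \rangle \;+\; \langle x_i^{\nu_{i,k}},\; x_j^{\nu_{j,k}} \rangle,
\]
and symmetrically for $Q(P_{jk})$ and $Q(P_{ik})$, where the $T_{\bullet,\bullet}$'s and $\nu_{\bullet,\bullet}$'s are as fixed in Section~\ref{Intro_Pre}. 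The problem thus reduces to deciding when the intersection $Q(P_{ij})\cap Q(P_{jk})\cap Q(P_{ik})$ strictly contains $I$.

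Next I would invoke the standard colon criterion: $\mathfrak{m} \in \Ass(S/I)$ if and only if there is a monomial $f \notin I$ with $x_if,\, x_jf,\, x_kf$ all lying in $I$. By the symmetry of $C_3$ I may test candidate witnesses of the form $f = x_i^a x_j^b x_k^c$, built from the exponents $\nu_{\bullet,\bullet}$, $\mu_{\bullet,\bullet}$, and the intermediate weights $w_{p,q}^\ell$. When all three $\alpha_{\bullet,\bullet}$ equal $1$, every $Q(P)$ is irreducible and a direct verification shows the intersection equals $I$, so no embedded prime arises; hence the interesting regime is when some $\alpha_{\bullet,\bullet}\geq 2$. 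In that case, an "intermediate" generator in $T_{\bullet,\bullet}$ provides the slack needed to construct a witness by combining small exponents from two edges with the minimum exponent of the third, in the spirit of the constructions used in Lemma~\ref{lem:conj lem 2} and Proposition~\ref{girth_6_embdd}.

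The main obstacle is the bookkeeping when more than one edge has $\alpha \geq 2$: the candidate $f$ must be chosen so that each of $x_if,\, x_jf,\, x_kf$ is divisible by \emph{some} generator of $I$ while $f$ itself is divisible by \emph{none}, and both the "plus one" moves and the non-divisibility checks must be simultaneously compatible with the chain of inequalities $w_{i,j}^1 > \cdots > w_{i,j}^{\alpha_{i,j}}$ across all three edges. I expect conditions (1) and (2) to emerge precisely as the system of inequalities among the $w_{i,j}^\ell, w_{j,k}^\ell, w_{i,k}^\ell$ characterizing when such a witness exists, and I would use Examples~\ref{example:4} and~\ref{example:5} (which have $G(I)=C_3$, $\alpha\geq 2$, and no embedded prime) as calibration for the sharp form of these inequalities, then close the argument by a symmetric case analysis over which pairs of edges carry the higher multiplicities.
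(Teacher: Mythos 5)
Your high-level framing agrees with the paper's: the minimal primes are the three height-$2$ covers, $\mathfrak{m}\in\Ass(S/I)$ is equivalent to $I^{(1)}\neq I$, and the argument comes down to producing (or ruling out) a witness monomial in $I^{(1)}\setminus I$. But there are two genuine problems. First, your claim that ``when all three $\alpha$'s equal $1$ \ldots\ a direct verification shows the intersection equals $I$, so no embedded prime arises'' is false: for $I=\left\langle x_1^2x_2,\,x_2^2x_3,\,x_3^2x_1\right\rangle$ every $\alpha$ is $1$ and every primary component is irreducible, yet $x_1x_2x_3\in Q(P)$ for all three minimal primes while $x_1x_2x_3\notin I$, so $\mathfrak{m}$ is embedded. (This is precisely why the paper defers the $\alpha\equiv 1$ case on $C_3$ to an external characterization rather than asserting it is trivial.) The error does not invalidate parts (a) and (b) themselves, since those assume some $\alpha\geq 2$, but it shows the reduction ``the interesting regime is $\alpha\geq 2$'' is not justified the way you state it.

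Second, and more importantly, the proposal stops exactly where the proof has to begin. For (a) you must exhibit a concrete witness; the paper takes $f=x_i^{\nu_{i,j}}x_j^{\nu_{j,i}}$ or $f=x_j^{\nu_{j,k}}x_k^{\nu_{k,j}}$ according to whether $\nu_{j,k}\leq\nu_{j,i}$, and checks membership in each $Q(P)$ --- this case split is not guessable from ``combining small exponents.'' For (b) the forward direction needs the explicit witness $g=x_i^{\max\{w_{i,k},\nu_{i,j}\}}x_j^{\nu_{j,i}}$ when $w_{i,k}<\mu_{i,j}$, and the converse needs an actual verification that, under $w_{i,k}^1\geq\mu_{i,j}$ and $w_{j,k}^1\geq\mu_{j,i}$, every generator of $Q(P_1)\cap Q(P_2)\cap Q(P_3)$ lies in $I$; the paper does this by writing out the three primary components and running a divisibility case analysis. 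Saying you ``expect conditions (1) and (2) to emerge'' and would ``calibrate'' against examples is a research plan, not a proof: none of the inequalities in the statement are actually derived, and no direction of either implication is established.
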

    \begin{enumerate}
        \item[(a)] If $\alpha_{i,j} \ge 2$ and $\alpha_{j,k} \ge 2$, then $\m \in \Ass(I)$.
        \item[(b)] Let $\alpha_{\ell,k} = 1$ for $\ell \in \{i,j\}$ and $\alpha_{i,j} \ge 2$. Then, $\m \notin \Ass(I)$ if and only if $w_{i,k}^1 \ge \mu_{i,j}$ and $w_{j,k}^1 \ge \mu_{j,i}$.
        \end{enumerate}
\begin{proof}
    (a) Let $P_1 = \left\langle x_i,x_j\right\rangle$, $P_2 = \left\langle x_j,x_k\right\rangle$ and $P_3 = \left\langle x_i,x_k\right\rangle $ be the minimal primes and $Q_{P_i}$ be the corresponding primary components. Then, consider the following monomial
    \[
    f = \left\{
        \begin{array}{lll}
	       x_i^{\nu_{i,j}} x_j^{\nu_{j,i}} & \text{ if\; } \nu_{j,k} \le \nu_{j,i}, \\
	       x_j^{\nu_{j,k}}x_k^{\nu_{k,j}} & \text{ if\; } \nu_{j,k} > \nu_{j,i}.
        \end{array}\right.
    \] 
    Now, we have $x_j^{\nu_{j,k}} \in Q({P_1})$, $x_j^{\nu_{j,i}} \in Q({P_2})$ and lastly, $x_i^{\nu_{i,j}}, x_k^{\nu_{k,j}} \in Q({P_3}).$ It is easy to see that $f \in I^{(1)}$. Since $\alpha_{i,j} \ge 2$ and $\alpha_{j,k} \ge 2$, we have $f \notin I$. Hence, $\m \in \Ass(I)$.\\
    (b) $(\Rightarrow)$ For simplicity of notation, we write $w_{i,k}^1 = w_{i,k}$, $w_{j,k}^1 = w_{j,k}, w_{k,i}^1 = w_{k,i}$, $w_{k,j}^1 = w_{k,j}$. Assume by way of contradiction that $w_{i,k} < \mu_{i,j}$. Then, consider the monomial $g = x_i^{\max\{w_{i,k}, \nu_{i,j}\}}x_j^{\nu_{j,i}}$. Since $\alpha_{i,j} \ge 2$ and $w_{i,k} < \mu_{i,j}$, we have $g \in I^{(1)} \setminus I.$ \\
    $(\Leftarrow)$  As discussed in (a), under the assumptions $w_{i,k} \ge \mu_{i,j}$ and $w_{j,k} \ge \mu_{j,i}$, we have
    \begin{align*}
        Q({P_1}) &= \left\langle x_i^{w_{i,k}}, x_i^{w_{i,j}^1}x_j^{w_{j,i}^1}, \ldots,x_i^{w_{i,j}^{\alpha_{i,j}}}x_j^{w_{j,i}^{\alpha_{i,j}}}, x_j^{w_{j,k}}\right\rangle,\\
        Q({P_2}) &= \left\langle x_j^{\nu_{j,i}}, x_k^{w_{k,i}}\right\rangle,\\
        Q({P_3}) &=\left\langle x_i^{\nu_{i,j}}, x_k^{w_{k,j}}\right\rangle.
    \end{align*}
    Let $f \in \G(I^{(1)})$. This implies that, $f \in Q({P_1}) \cap Q({P_2}) \cap Q({P_3}).$ Suppose that $x_i^{w_{i,k}} \mid f$. If $x_j^{\nu_{j,i}} \mid f$, then $x_i^{w_{i,k}} x_j^{\nu_{j,i}}\mid f$. Since $w_{i,k} \ge \mu_{i,j}$, this implies $f \in I$. Next, if $x_k^{w_{k,i}} \mid f$, then $x_i^{w_{i,k}}x_k^{w_{k,i}} \mid f$ and thus $f \in I$. On the other hand, suppose that $x_j^{w_{j,k}} \mid f.$ Since $f \in Q({P_3}),$ if $x_i^{\nu_{i,j}} \mid f$, then we are done due to the assumption that $w_{j,k} \ge \mu_{j,i}$. Next, if $x_k^{w_{k,j}} \mid f$, then $x_k^{w_{k,j}}x_j^{w_{j,k}} \mid f$ and thus $f \in I$. This ensures that $I^{(1)} = I$ and hence, $\mathfrak{m} \notin \Ass(I).$
\end{proof}

\section*{Acknowledgements}
The authors would like to thank the anonymous referees for their careful reading and many valuable suggestions, which have significantly improved the clarity and presentation of the paper. The authors would like to thank Trung Chau and Antonino Ficarra for some helpful comments. The authors acknowledge the use of the computer algebra system Macaulay2 \cite{M2} and the online platform SageMath for testing their computations. The second author acknowledges support from the Infosys Foundation. The third author is supported by the Indian Institute of Technology Jammu, under the Seed Grant (SGR-100014).

\subsection*{Data availability statement} Data sharing is not applicable to this article, as no new data were created or analysed in this study.

\subsection*{Conflict of interest} The authors declare that they have no known competing financial interests or personal
relationships that could have appeared to influence the work reported in this paper.

\bibliography{ref.bib}

\newcommand{\etalchar}[1]{$^{#1}$}
\begin{thebibliography}{GMBS{\etalchar{+}}18}

\bibitem[CC90]{Packing}
G\`erard Cornu\'ejols and Michele Conforti.
\newblock A decomposition theorem for balanced matrices.
\newblock {\em Integer Programming and Combinatorial Optimization}, (74):147--169, 1990.

\bibitem[CEHH17]{CooperEtAl2017}
Susan~M. Cooper, Robert J.~D. Embree, Huy~T\`ai H\`a, and Andrew~H. Hoefel.
\newblock Symbolic powers of monomial ideals.
\newblock {\em Proc. Edinb. Math. Soc. (2)}, 60(1):39--55, 2017.

\bibitem[Das24]{das2024equality}
Kanoy~Kumar Das.
\newblock Equality of ordinary and symbolic powers of some classes of monomial ideals.
\newblock {\em Graphs and Combinatorics}, 40(1):12, 2024.

\bibitem[DDSG{\etalchar{+}}18]{SymbolicSurvey2018}
Hailong Dao, Alessandro De~Stefani, Elo\'isa Grifo, Craig Huneke, and Luis N\'u\~nez Betancourt.
\newblock Symbolic powers of ideals.
\newblock In {\em Singularities and foliations. geometry, topology and applications}, volume 222 of {\em Springer Proc. Math. Stat.}, pages 387--432. Springer, Cham, 2018.

\bibitem[FM25]{ficarra2025symbolic}
Antonino Ficarra and Somayeh Moradi.
\newblock Symbolic powers of polymatroidal ideals.
\newblock {\em Journal of Pure and Applied Algebra}, 229(10):108082, 2025.

\bibitem[GMBS{\etalchar{+}}18]{gimenez2018symbolic}
Philippe Gimenez, Jos{\'e} Mart{\'\i}nez-Bernal, Aron Simis, Rafael~H Villarreal, and Carlos~E Vivares.
\newblock Symbolic powers of monomial ideals and cohen-macaulay vertex-weighted digraphs.
\newblock {\em Singularities, Algebraic Geometry, Commutative Algebra, and Related Topics: Festschrift for Antonio Campillo on the Occasion of his 65th Birthday}, pages 491--510, 2018.

\bibitem[GMBV24]{GMV2024}
Gonzalo Grisalde, Jos\'e{} Mart\'inez-Bernal, and Rafael~H. Villarreal.
\newblock Normally torsion-free edge ideals of weighted oriented graphs.
\newblock {\em Comm. Algebra}, 52(4):1672--1685, 2024.

\bibitem[GRV05]{GRV2005}
Isidoro Gitler, Enrique Reyes, and Rafael~H. Villarreal.
\newblock Blowup algebras of ideals of vertex covers of bipartite graphs.
\newblock In {\em Algebraic structures and their representations}, volume 376 of {\em Contemp. Math.}, pages 273--279. Amer. Math. Soc., Providence, RI, 2005.

\bibitem[GRV09]{GRV2009}
Isidoro Gitler, Enrique Reyes, and Rafael~H. Villarreal.
\newblock Blowup algebras of square-free monomial ideals and some links to combinatorial optimization problems.
\newblock {\em Rocky Mountain J. Math.}, 39(1):71--102, 2009.

\bibitem[GS]{M2}
Daniel~R. Grayson and Michael~E. Stillman.
\newblock Macaulay2, a software system for research in algebraic geometry.
\newblock Available at \url{http://www2.macaulay2.com}.

\bibitem[HH11]{herzog2011monomial}
J\"urgen Herzog and Takayuki Hibi.
\newblock {\em Monomial ideals}, volume 260 of {\em Graduate Texts in Mathematics}.
\newblock Springer-Verlag London, Ltd., London, 2011.

\bibitem[HLM{\etalchar{+}}19]{HaOriented2019}
Huy~T\`{a}i H\`{a}, Kuei-Nuan Lin, Susan Morey, Enrique Reyes, and Rafael~H. Villarreal.
\newblock Edge ideals of oriented graphs.
\newblock {\em International Journal of Algebra and Computation}, 29(03):535--559, 2019.

\bibitem[MP21]{MandalPradhan2021}
Mousumi Mandal and Dipak~Kumar Pradhan.
\newblock Symbolic powers in weighted oriented graphs.
\newblock {\em Internat. J. Algebra Comput.}, 31(3):533--549, 2021.

\bibitem[MPV24]{mendez2024symbolic}
Fernando~O. M\'{e}ndez, Maria~Vaz Pinto, and Rafael~H. Villarreal.
\newblock Symbolic powers: Simis and weighted monomial ideals.
\newblock {\em Journal of Algebra and Its Applications}, 0(0):2541001, 2024.

\bibitem[Pee11]{peeva2010graded}
Irena Peeva.
\newblock {\em Graded syzygies}, volume~14 of {\em Algebra and Applications}.
\newblock Springer-Verlag London, Ltd., London, 2011.

\bibitem[PRT19]{PRT2019}
Yuriko Pitones, Enrique Reyes, and Jonathan Toledo.
\newblock Monomial ideals of weighted oriented graphs.
\newblock {\em Electron. J. Combin.}, 26(3):Paper No. 3.44, 18, 2019.

\bibitem[SVV94]{SimisVasconcelosVillarreal1994}
Aron Simis, Wolmer~V. Vasconcelos, and Rafael~H. Villarreal.
\newblock On the ideal theory of graphs.
\newblock {\em J. Algebra}, 167(2):389--416, 1994.

\bibitem[Vil90]{Villarreal1990}
Rafael~H. Villarreal.
\newblock Cohen-{M}acaulay graphs.
\newblock {\em Manuscripta Math.}, 66(3):277--293, 1990.

\bibitem[Vil15]{MR3362802}
Rafael~H. Villarreal.
\newblock {\em Monomial algebras}.
\newblock Monographs and Research Notes in Mathematics. CRC Press, Boca Raton, FL, second edition, 2015.

\end{thebibliography}
\bibliographystyle{alpha}
\end{document}